\DeclareMathOperator{\pd}{proj.dim}
\DeclareMathOperator{\id}{inj.dim}
\DeclareMathOperator{\flatdim}{flat.dim}
\DeclareMathOperator{\LH}{LH}
\DeclareMathOperator{\silp}{silp}
\DeclareMathOperator{\spli}{spli}
\DeclareMathOperator{\splif}{spli-finiteness}
\DeclareMathOperator{\sflilf}{sfli-local-finiteness}
\DeclareMathOperator{\splilf}{spli-local-finiteness}
\DeclareMathOperator{\GInj}{GInj}
\DeclareMathOperator{\GProj}{GProj}
\DeclareMathOperator{\Hom}{Hom}
\DeclareMathOperator{\hh}{H}
\DeclareMathOperator{\first}{(a)}
\DeclareMathOperator{\ditio}{(b)}
\DeclareMathOperator{\tritio}{(c)}
\DeclareMathOperator{\Gcd}{Gcd}
\DeclareMathOperator{\tor}{Tor}
\DeclareMathOperator{\GFlat}{GFlat}
\DeclareMathOperator{\Ij}{inj}
\DeclareMathOperator{\sfli}{sfli}
\DeclareMathOperator{\silf}{silf}
\DeclareMathOperator{\WChar}{WChar}
\DeclareMathOperator{\Gpd}{Gpd}
\DeclareMathOperator{\Gid}{Gid}
\DeclareMathOperator{\Ghd}{Ghd}
\DeclareMathOperator{\fgidim}{FinGid}
\DeclareMathOperator{\supr}{sup}
\DeclareMathOperator{\Gfd}{Gfd}
\DeclareMathOperator{\Findim}{FinProjDim}
\DeclareMathOperator{\ffdim}{FinFlatDim}
\DeclareMathOperator{\fidim}{FinInjDim}
\DeclareMathOperator{\Ext}{Ext}
\DeclareMathOperator{\Coind}{Coind}
\DeclareMathOperator{\Res}{Res}
\DeclareMathOperator{\Char}{Char}
\DeclareMathOperator{\FinGpd}{FinGpd}
\DeclareMathOperator{\FinGfd}{FinGfd}
\DeclareMathOperator{\one}{(i)}
\DeclareMathOperator{\three}{(iii)}
\DeclareMathOperator{\FP}{FP}
\DeclareMathOperator{\two}{(ii)}
\DeclareMathOperator{\op}{op}
\DeclareMathOperator{\crll}{Corollary }
\DeclareMathOperator{\thrm}{Theorem }
\DeclareMathOperator{\mm}{max}
\theoremstyle{plain}
\newtheorem{theorem}{Theorem}[section]
\newtheorem{lemma}[theorem]{Lemma}
\newtheorem{proposition}[theorem]{Proposition}
\newtheorem{corollary}[theorem]{Corollary}
\newtheorem{definition}[theorem]{Definition}
\newtheorem{question}[theorem]{Question}
\theoremstyle{remark}
\newtheorem{remark}[theorem]{Remark}
\newtheorem{motivation}[theorem]{Motivation}
\newtheorem{note}[theorem]{Note}
\numberwithin{equation}{section}
\begin{document}

\title[Gorenstein analogues of a projectivity criterion over group algebras]
{Gorenstein analogues of a projectivity criterion over group algebras}

\author[Rudradip Biswas]
{Rudradip Biswas }
\address{Department of Mathematics\\
University of Warwick\\
Zeeman Building, Coventry CV4 7AL, UK}
\email{rudradip.biswas@warwick.ac.uk}

\author[Dimitra-Dionysia Stergiopoulou]
{Dimitra-Dionysia Stergiopoulou}
\address{Department of Mathematics\\
University of Athens\\ Athens 15784\\
Greece}
\email{dstergiop@math.uoa.gr}

\subjclass[2010]{Primary: 20C07, Secondary: 18G05, 20K40.}

\date{\today}

\keywords{}

\begin{abstract}
{We formulate and answer Gorenstein projective, flat, and injective analogues of a classical projectivity question for group rings under some mild additional assumptions. Although the original question, that was proposed by Jang-Hyun Jo in 2007, was for integral group rings, in this article, we deal with more general commutative base rings. We make use of the vast developments that have happened in the field of Gorenstein homological algebra over group rings in recent years, and we also improve and generalize several existing results from this area along the way.}
\end{abstract}

\maketitle

\section{Introduction and motivating questions}\label{s0}

In \cite{jo1}, Jang-Hyun Jo asked the following question:

\begin{question}\label{moti} Let $G$ be a group such that $\{\mathbb{Z}G$-projectives$\}=\{\mathbb{Z}G$-modules that are $\mathbb{Z}$-projective and have finite projective dimension over $\mathbb{Z}G\}$ (this property is known to be true for finite $G$). Then, does $G$ have to be finite?

\end{question}

Jo showed that the answer to this question is ``yes" if it is already assumed that $G$ is a group locally in Kropholler's hierarchy with the class of finite groups, $\mathscr{F}$, as the base class (in other words, $G \in \LH\mathscr{F}$ - see Definition \ref{hx-def}, this is a very large family of groups with only a handful of groups known to lie outside it). 

The first author revisited this question in \cite{b} and by using what was then some recent developments in the understanding of the behaviour of several cohomological invariants of groups, generalized Jo's result to groups in $\LH\mathscr{F}_{\phi,\mathbb{Z}}$ (with $\mathscr{F}_{\phi,\mathbb{Z}}$ denoting the class of all groups of type $\Phi$ over $\mathbb{Z}$ - a group $G$ is said to be of type $\Phi$ \cite{t} over $\mathbb{Z}$ if for every $\mathbb{Z}G$-module $M$, $\pd_{\mathbb{Z}G}(M)<\infty$ iff $\pd_{\mathbb{Z}H}(M)<\infty$ for all finite $H\leq G$) with a much shorter proof. 

In the last few years, a lot of exciting progress has occurred in the Gorenstein homological literature for group rings, especially with the commutative base rings required to only satisfy much milder finiteness conditions (like, the supremum over the projective/flat dimension of injectives being finite) than finite global dimension. So, it is reasonable to ask whether one can formulate and answer Gorenstein projective/flat/injective analogues of Question \ref{moti} for rings more general than $\mathbb{Z}$. For any ring $R$, $\GProj(R)$, $\GFlat(R)$, and $\GInj(R)$ denote the classes of (left) Gorenstein projective, Gorenstein flat, and Gorenstein injective $R$-modules separately.

\begin{question}\label{central} Take a commutative ring $A$ and a group $G$. Under mild assumptions on $A$ and $G$, reach completely group theoretic conclusions involving $G$ in the following three cases separately:

\begin{itemize} 

\item[(i)] when $\GProj(A) \bigcap \{AG$-modules with finite Gorenstein projective dimension$\}=\GProj(AG)$.

\item[(ii)] when $\GFlat(A) \bigcap \{AG$-modules with finite Gorenstein flat dimension$\}=\GFlat(AG)$.

\item[(iii)] when $\GInj(A) \bigcap \{AG$-modules with finite Gorenstein injective dimension$\}=\GInj(AG)$.
\end{itemize}

\end{question}

We approach Question \ref{central} in this paper, and our answers are summarized below. The assumptions that will be placed on the group $G$ will be in the language of Kropholler's hierarchy (see Definition \ref{hx-def}) - this hierarchy starts with a base class of groups which, in the hypotheses of Theorem \ref{summary}.(i)-(iii), are either the class of all groups admitting a characteristic module (see Definition \ref{char-wchar}; this class is denoted by $\mathscr{X}_{\Char}$) or the class of all groups admitting a \textit{weak} characteristic module (again, see Definition \ref{char-wchar}; this class is denoted by $\mathscr{X}_{\WChar}$); both of these classes are larger than the classes of groups taken as base classes in the previous approaches to Question \ref{moti} by Jo in \cite{jo1} and the first author in \cite{b}.

\begin{theorem}\label{summary} Let $A$ be a commutative ring and let $G$ be a group.  

\begin{itemize}

\item[(i)] $[\thrm \ref{m1}]$ Assume the hypothesis of $\ref{central}\one$. If, additionally, the supremum over the projective dimension of $A$-injectives is finite, and $G \in \LH\mathscr{X}_{\Char}$, then $G$ has to be finite. 

\item[(ii)] $[\thrm \ref{m2}]$ Assume the hypothesis of $\ref{central}\two$. If, additionally, $A$ is $\aleph_0$-Noetherian (i.e. all ideals are countably generated), the supremum over the flat dimension of $A$-injectives is finite, and $G \in \LH\mathscr{X}_{\WChar}$, then $G$ has to be locally finite.

\item[(iii)] $[\thrm \ref{m4}]$ Assume the hypothesis of $\ref{central}\three$. If, additionally, $A=\mathbb{Z}$, and $G\in\LH\mathscr{X}_{\Char}$, then $G$ has to be finite.

\end{itemize}
\end{theorem}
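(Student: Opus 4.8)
The three parts share a common skeleton, so the plan is to isolate a single ``test module'' in each case, show that the finiteness hypothesis on $A$ together with $G\in\LH\mathscr{X}_{\Char}$ (resp. $\LH\mathscr{X}_{\WChar}$) forces this module to have finite Gorenstein projective (resp. flat, injective) dimension over $AG$, feed it into the class equality of Question \ref{central}, and finally translate ``Gorenstein dimension zero'' into a group-theoretic conclusion. For \one\ and \two\ the test module is the trivial module $A$: it is projective, hence flat, over itself, so it lies in $\GProj(A)$ and in $\GFlat(A)$ automatically; for \three\ over $A=\mathbb{Z}$ the test module is the injective cogenerator $\mathbb{Q}/\mathbb{Z}$ with trivial $G$-action, which is $\mathbb{Z}$-injective and therefore lies in $\GInj(\mathbb{Z})$. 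The final translation I would import from the existing theory: $\Gpd_{AG}(A)=0$ forces $G$ finite, $\Gfd_{AG}(A)=0$ forces $G$ only locally finite (flatness being a local condition, this is the weakest possible conclusion and explains the discrepancy between \two\ and \one, \three), and $\Gid_{\mathbb{Z}G}(\mathbb{Q}/\mathbb{Z})=0$ forces $G$ finite.

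The heart of each proof is showing the test module has \emph{finite} Gorenstein dimension over $AG$. Here the plan is to propagate a homological finiteness invariant of the base ring to the group ring: I would show that $\spli(A)<\infty$ implies $\spli(AG)<\infty$ for $G\in\LH\mathscr{X}_{\Char}$, that $\sfli(A)<\infty$ implies $\sfli(AG)<\infty$ for $G\in\LH\mathscr{X}_{\WChar}$ (this is where the $\aleph_0$-Noetherian hypothesis enters, to keep the Gorenstein-flat theory well behaved and to bound Gorenstein flat dimensions in terms of $\sfli$), and the injective analogue $\silp(\mathbb{Z}G)<\infty$. Since finiteness of $\spli$ (resp. $\sfli$, $\silp$) of a ring bounds the Gorenstein projective (resp. flat, injective) dimension of every module over that ring, this immediately gives the test module finite Gorenstein dimension over $AG$, and the class equality of Question \ref{central} then collapses that dimension to $0$.

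The propagation itself is the main obstacle, and it is exactly what the Kropholler hierarchy and the (weak) characteristic module are designed to handle. For a base group in $\mathscr{X}_{\Char}$ the characteristic module is precisely the certificate that $\Gpd_{AG}(A)<\infty$, which under the standing hypothesis $\spli(A)<\infty$ is equivalent to $\spli(AG)<\infty$, so the base case is built into the definition of the class; the real work is the inductive step along the $\LH$-operation. There I would run Kropholler's induction, using that an $\LH\mathscr{X}$-group acts on a finite-dimensional contractible complex with stabilizers lying lower in the hierarchy, to show the relevant finiteness invariant is closed under the hierarchy operations and under passage to finite subgroups, given that it is finite over $A$. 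I expect this step to be delicate precisely because naive dimensions can blow up along the hierarchy; the point is to track the \emph{finiteness} of the correct Gorenstein-homological invariant rather than of ordinary cohomological dimension, and to verify the closure properties that the inductive scheme demands.

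Finally, for \three\ I anticipate a two-stage conclusion, which is why this part is confined to $A=\mathbb{Z}$. Pontryagin duality $\Hom_{\mathbb{Z}}(-,\mathbb{Q}/\mathbb{Z})$ over $\mathbb{Z}G$ interchanges Gorenstein injective and Gorenstein flat modules, so the injective hypothesis first yields local finiteness exactly as in \two; one then upgrades to genuine finiteness using that $\silp(\mathbb{Z}G)=\infty$ for every infinite locally finite group, which contradicts the finiteness $\silp(\mathbb{Z}G)<\infty$ established in the propagation step. This upgrade is special to the base ring $\mathbb{Z}$, whose injective invariants detect infinite locally finite groups (unlike the flat invariants over a general $A$), and it is the second place where I would expect to spend real effort.
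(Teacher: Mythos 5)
Your overall skeleton (test module, finiteness over $AG$, collapse to Gorenstein dimension zero via the class equality, group-theoretic translation) matches the paper's for parts (i) and (ii), but the step you call ``the heart of each proof'' fails as stated. You propose to prove the unconditional propagation ``$\spli(A)<\infty$ and $G\in\LH\mathscr{X}_{\Char}$ imply $\spli(AG)<\infty$'' by induction along Kropholler's hierarchy. This statement is simply false: take $A=\mathbb{Z}$ and $G$ free abelian of infinite rank. Every finitely generated subgroup is free abelian of finite rank, so $G\in\LH\mathscr{F}\subseteq\LH\mathscr{X}_{\Char}$ and $\spli(\mathbb{Z})=1$, yet $\spli(\mathbb{Z}G)=\infty$ (the values $\spli$ of the subgroups, roughly $n+1$ in rank $n$, are unbounded, and no hierarchy induction can repair this --- finiteness of the invariant is not closed under the $\LH$-operation without a \emph{uniform} bound). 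The paper's resolution is to use the class-equality hypothesis \emph{twice}, and crucially \emph{before} any hierarchy argument: Lemmas \ref{m01}, \ref{m02} and \ref{lemd} use the class equality to produce the uniform bound $\Findim(AG)\leq\spli(A)$ (resp.\ $\ffdim(AG)\leq\sfli(A)$, $\fidim(AG)\leq\spli(A)$), and only then do the hierarchy hypotheses enter, through the unconditional identities $\spli(AG)=\Findim(AG)$ for $G\in\LH\mathscr{X}_{\Char}$ and $\sfli(AG)=\ffdim(AG)$ for $G\in\LH\mathscr{X}_{\WChar}$ (imported from the second author's earlier work), which convert the finitistic bound into $\spli(AG)<\infty$, hence $\Gcd_A(G)<\infty$; the class equality is then applied a second time to the test module $A$ to get $\Gcd_A(G)=0$ (resp.\ $\Ghd_A(G)=0$). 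Your version uses the hypothesis only once, at the collapse step, and the missing first use is exactly what makes the propagation true. (A smaller point: for (ii) you ``import'' the implication $\Ghd_A(G)=0\Rightarrow G$ locally finite, but for a fixed general $A$ this was not in the literature --- it is Corollary \ref{ghd0char}, one of the paper's new results, and is where the $\aleph_0$-Noetherian hypothesis actually works.)

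Part (iii) has a second, independent error. Your upgrade from locally finite to finite rests on the claim that $\silp(\mathbb{Z}G)=\infty$ for every infinite locally finite group; in fact $\spli(\mathbb{Z}G)=\silp(\mathbb{Z}G)=2$ for any countably infinite locally finite group (e.g.\ an infinite direct sum of copies of $\mathbb{Z}/2$), so finiteness of $\silp(\mathbb{Z}G)$ detects nothing here. What detects finiteness is the exact \emph{value}: $\spli(\mathbb{Z}G)=1$ iff $G$ is finite (Proposition \ref{finchar}.(ii)). Accordingly, the paper does not dualize to the flat case at all (your Pontryagin-duality reduction is also shaky, since dualizing the class equality $\GInj(A)\cap\GInj^{<\infty}(AG)=\GInj(AG)$ does not obviously yield its Gorenstein-flat analogue); instead it proves the \emph{equality} $\spli(AG)=\spli(A)$ by sandwiching: $\spli(A)\leq\spli(AG)=\fidim(AG)\leq\spli(A)$, where the middle equality is Theorem \ref{m3} (the four-invariant identity $\fidim(AG)=\silp(AG)=\spli(AG)=\kappa_{\Ij}(AG;\mathscr{X}_{\Char})$, proved via the new invariant $\kappa_{\Ij}$ and a transfinite induction up the hierarchy) and the last inequality is Lemma \ref{lemd}. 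Only then does $A=\mathbb{Z}$ give $\spli(\mathbb{Z}G)=1$ and hence finiteness. This machinery --- not a duality trick --- is the real content of part (iii) and is entirely absent from your plan.
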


 In Theorems \ref{summary}.(i)-(ii), we do not require $A$ to be $\mathbb{Z}$ to reach our final group theoretic conclusion, and although in Theorem \ref{summary}.(iii), we do need $A$ to be $\mathbb{Z}$ for the final conclusion, the reader will see that the actual statement of Theorem \ref{m4} is for more general base rings (see Subsection \ref{s25}).

\textbf{Structure of this paper:} Theorem \ref{summary}.(i) is proved quite directly in Subsection \ref{s20}. Theorem \ref{summary}.(ii) is proved in Subsection \ref{s22} and Theorem \ref{summary}.(iii) in Subsection \ref{s24}, with Subsections \ref{s21} and \ref{s23} devoted to developing some technical machinery involving several (co)homological invariants that will turn out to be quite crucial for Subsections \ref{s22} and \ref{s24} respectively. Several of these intermediate results from \ref{s21} and \ref{s23} are interesting in their own right, we end this section highlighting one of them:

\begin{theorem} $[\crll \ref{ghd0char}]$\label{ghd0-summary} Let $A$ be a commutative ring satisfying the hypothesis on $A$ placed in $\thrm \ref{summary}\two$, and let $G$ be a group. Then, the Gorenstein homological dimension of $G$ with respect to $A$ is $0$ iff $G$ is locally finite. 

\end{theorem}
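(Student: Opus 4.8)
I would read the statement as the equivalence $\Ghd_A(G)=\Gfd_{AG}(A)=0 \iff G$ is locally finite, and prove the two implications separately; the ``only if'' direction is where the standing hypotheses on $A$ (namely that $A$ is $\aleph_0$-Noetherian with $\sfli(A)<\infty$) carry the weight.

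For the ``if'' direction I would first dispatch finite groups and then bootstrap by a colimit argument. For finite $H$, the trivial module $A$ sits as a cycle in the Tate (norm-map) resolution, a totally acyclic complex of free $AH$-modules; since $\sfli(A)<\infty$ forces $\sfli(AH)<\infty$ as well, this complex witnesses that $A$ is Gorenstein flat over $AH$, so $\Ghd_A(H)=0$. For a locally finite $G$ I would write $G$ as the directed union of its finite subgroups $H$ and identify the trivial $AG$-module with the filtered colimit $A=\varinjlim_{H}\, AG\otimes_{AH}A$ of the induced permutation modules $A[G/H]$. Each $A[G/H]$ is Gorenstein flat over $AG$ because $AG$ is free (hence flat) over $AH$, so induction preserves Gorenstein flatness; and the class $\GFlat(AG)$ is closed under filtered colimits. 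Hence $A\in\GFlat(AG)$, i.e.\ $\Ghd_A(G)=0$.

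For the ``only if'' direction I would begin with a reduction to the finitely generated case. Restriction along the inclusion $AH\hookrightarrow AG$ (again free, hence flat) carries a totally acyclic complex of flat $AG$-modules to one of flat $AH$-modules, so $\Ghd_A(H)\le\Ghd_A(G)$ for every subgroup $H\le G$; in particular $\Ghd_A(G)=0$ gives $\Ghd_A(H)=0$ for all finitely generated $H$. Since $G$ is locally finite exactly when every finitely generated subgroup is finite, it suffices to show that a finitely generated $H$ with $\Ghd_A(H)=0$ must be finite. Here I would invoke the quantitative description of $\Ghd_A(-)$ in terms of the invariant $\sfli$ developed in Subsection \ref{s21}: under $\sfli(A)<\infty$ and $\aleph_0$-Noetherianity, the vanishing $\Ghd_A(H)=0$ is controlled by $\sfli(AH)$, and the point is to show $\sfli(AH)$ strictly exceeds its value for finite $H$ as soon as $H$ is infinite.

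The main obstacle is precisely this last step for \emph{infinite finitely generated torsion} groups (Grigorchuk-type groups): they contain no copy of $\mathbb{Z}$ to restrict to, and their trivial module has infinite ordinary flat dimension, so the convenient shortcut ``Gorenstein flat plus finite flat dimension implies flat'' is unavailable to force a contradiction directly. This is exactly the situation that the finiteness hypotheses on $A$ are designed to handle: the $\aleph_0$-Noetherian condition lets one run a Govorov--Lazard-type analysis of the flat modules appearing in a totally acyclic resolution, while $\sfli(A)<\infty$ pins down the behaviour of $\sfli(AH)$ and detects the presence of an infinite finitely generated subgroup. Assembling these, $\Ghd_A(H)=0$ collapses to finiteness of $H$, completing the reduction and hence the equivalence.
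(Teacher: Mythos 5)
Your ``if'' direction is correct and is in fact a more self-contained argument than the paper's (which simply cites \cite[Remark 3.6]{dds}): writing $A=\varinjlim_{H}A[G/H]$ over the directed family of finite subgroups, using that induction preserves Gorenstein flatness and that $\GFlat(AG)$ is closed under filtered colimits, is sound. Two caveats on the reduction step of your ``only if'' direction: the inequality $\Ghd_A(H)\leq\Ghd_A(G)$ is \emph{not} obtained by merely restricting an F-totally acyclic complex, since restricting kills the total acyclicity condition (for an injective $AH$-module $J$ one has $J\otimes_{AH}F_{*}\cong(J\otimes_{AH}AG)\otimes_{AG}F_{*}$, and the induced module $J\otimes_{AH}AG$ need not be $AG$-injective); this is exactly why the hypothesis $\sfli(A)<\infty$ enters in Lemma \ref{lemma-ks}.(i), which is the nontrivial result of Ren--Yang/Kaperonis--Stergiopoulou that you should be invoking there.

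The decisive gap is the core of the ``only if'' direction: finiteness of a finitely generated $H$ with $\Ghd_A(H)=0$. You propose to show that $\sfli(AH)$ strictly exceeds $\sfli(A)$ whenever $H$ is infinite and finitely generated, but this route cannot work as stated: by Lemma \ref{lemma-ks}.(ii), $\Ghd_A(H)=0$ already forces $\sfli(AH)=\sfli(A)$, so your target statement is precisely the $\sflilf$ principle of Subsection \ref{s25}, which the paper records as an \emph{open question} for general Noetherian commutative $A$ (it is known only for $A=\mathbb{Z}$, Corollary \ref{spli-sfli-noetherian}.(ii)). In other words, your reduction throws away the extra strength of the hypothesis ``$A$ is Gorenstein flat over $AH$'' and lands on an unproved principle; the appeal to a ``Govorov--Lazard-type analysis'' is a gesture, not an argument. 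The paper's actual proof (Theorem \ref{ghd0}) keeps that extra strength: from $\Gfd_{AH}(A)=0$ it extracts, via Bennis's Tor-criterion \cite[Theorem 2.8]{bennis} together with GF-closedness of all rings \cite[Corollary 4.12]{ss}, that $\hh_i(H,-)$ vanishes on injective $AH$-modules for $i\geq 1$, and, via Lemma \ref{lem2} (using $\aleph_0$-Noetherianity of $AH$, the equality $\silp=\spli$ of \cite{de}, and $\spli\leq\sfli+1$ from \cite{et2011}), that $\id_{AH}(AH)<\infty$; it then runs Emmanouil's argument \cite[Proposition 2.1]{ejoa}, using Strebel's theorem and Ikenaga's observation that the coinvariants of any injective module over an infinite finitely generated group vanish, to reach a contradiction. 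None of this machinery is replaced by anything in your proposal, so the proof is incomplete at exactly the step that carries the theorem.
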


Although the Gorenstein cohomological version of Theorem \ref{ghd0-summary} has been known for a while, Theorem \ref{ghd0-summary} is not only new, but it improves on the previously existing versions of this result where $A$ had to be either $\mathbb{Z}$ or one had to require $\Ghd_A(G)=0$ for all commutative rings $A$.
\section{Background}\label{s1}

\subsection{Gorenstein homological background}

Since we will be dealing with Gorenstein projectives a fair amount in this paper, it is fitting to start with its definition. All of our modules will be left modules.

\begin{definition} \label{d1} Let $R$ be a ring.  
\begin{itemize}
\item[(a)] A Gorenstein projective $R$-module is an $R$-module that arises as a kernel in a totally acyclic complex of $R$-projectives. Recall that an acyclic complex of $R$-projectives, $\{P_i, d_i\}_{i \in \mathbb{Z}}$, is called totally acyclic if, for any $R$-projective $Q$, $\Hom_R(P_{*},Q)$ is acyclic. The class of all Gorenstein projective $R$-modules is denoted by $\GProj(R)$, and the class of all $R$-modules with finite Gorenstein projective dimension is denoted by $\GProj^{<\infty}(R)$.

Recall that, for any $R$-module $M$, the Gorenstein projective dimension, denoted $\Gpd_R(M)$, is the smallest ineteger $n$ such that there exists an exact sequence $$0 \rightarrow G_n \rightarrow G_{n-1} \rightarrow \cdots \rightarrow G_0 \rightarrow M \rightarrow 0,$$ with each $G_i \in \GProj(R)$; if no such $n$ exists, we say $\Gpd_R(M)$ is not finite. 

Dually, an $R$-module $N$ is called Gorenstein injective if it occurs as a kernel in a totally acyclic complex of $R$-injectives (i.e. an acyclic complex of $R$-injectives $I_*$ such that $\Hom_R(J,I_*)$ is also acyclic for any $R$-injective $J$). The class of Gorenstein injective $R$-modules is denoted by $\GInj(R)$, and Gorenstein injective dimension of $N$ as an $R$-module, denoted by $\Gid_R(N)$, is the smallest integer $n$ such that there is an exact sequence $$0 \rightarrow N \rightarrow G_0 \rightarrow G_1 \rightarrow ...\rightarrow G_n \rightarrow 0,$$ with each $G_i \in \GInj(R)$; if no such $n$ exists, we say $\Gid_R(N)$ is not finite. The class of all $R$-modules with finite Gorenstein injective dimension is denoted by $\GInj^{<\infty}(R)$.

When $R$ is a group algebra $AG$, for any discrete group $G$ and any commutative ring $A$, the Gorenstein cohomological dimension of $G$ over $A$, written $\Gcd_A(G)$, is defined as $\Gpd_{AG}(A)$. 

\item[(b)] A Gorenstein flat $R$-module is an $R$-module that arises as a kernel in an acyclic complex of flat $R$-modules $(F_i, \delta_i)_{i \in \mathbb{Z}}$ that has the property that for any injective right $R$-module $I$, the complex $I \otimes F_{*}$ is acyclic. The class of all Gorenstein flat $R$-modules is denoted by $\GFlat(R)$, and the class of all $R$-modules with finite Gorenstein flat dimension is denoted by $\GFlat^{<\infty}(R)$.

Recall that, for any $R$-module $M$, the Gorenstein flat dimension, denoted by $\Gfd_R(M)$, is the smallest ineteger $n$ such that there exists an exact sequence $$0 \rightarrow G_n \rightarrow G_{n-1} \rightarrow . . . \rightarrow G_0 \rightarrow M \rightarrow 0,$$ with each $G_i \in \GFlat(R)$; if no such $n$ exists, we say $\Gfd_R(M)$ is not finite. 

When $R$ is a group algebra $AG$, for any discrete group $G$ and any commutative ring $A$, the Gorenstein homological dimension of $G$ over $A$, written $\Ghd_A(G)$, is defined as $\Gfd_{AG}(A)$. 

\item[(c)] $\Findim(R)$ ($\fidim(R)$, $\ffdim(R)$, $\FinGpd(R)$, $\fgidim(R)$ and $\FinGfd(R)$, respectively) denotes the supremum over the projective dimension (respectively, injective dimension, flat dimension, Gorenstein projective dimension, Gorenstein injective dimension and Gorenstein flat dimension) of all (left) $R$-modules with finite projective dimension (respectively, finite injective dimension, finite flat dimension, finite Gorenstein projective dimension, finite Gorenstein injective dimension and finite Gorenstein flat dimension).

\item[(d)] $\spli(R)$ and $\silp(R)$ denote the supremum over the projective dimension of all $R$-injectives and the supremum over the injective dimension of all $R$-projectives, respectively - these invariants were first introduced in \cite{gg}. Similarly, $\sfli(R)$ and $\silf(R)$ denote the supremum over the flat dimension of all $R$-injectives and the supremum over the injective dimension of all $R$-flats, respectively. 
\end{itemize}
\end{definition}

\begin{remark}
    Almost all of the modules considered in this paper will be modules over self-opposite rings (either a commutative ring or a group ring over a commutative base). So, we will not need to worry about left/right modules. 
\end{remark}

Quite a lot of known results involving the invariants defined in Definition \ref{d1}(c)-(d) will be used in proving several technical results in this paper that will, in turn, play important roles in proving the main theorems. We collect some of them in one place below:

\begin{proposition}\label{finchar}  

\begin{itemize}
    \item[(i)] For any ring $R$, $\Findim(R)=\FinGpd(R)$ \cite[Theorem 2.28]{hh}, $\fidim(R) = \fgidim(R)$ \cite[Theorem 2.29]{hh}, and $\ffdim(R)=\FinGfd(R)$ \cite[Proposition 2.3]{em}.

    \item[(ii)] For any group $G$, $G$ is finite $\Longleftrightarrow \spli(\mathbb{Z}G)=1 \Longleftrightarrow \Gcd_A(G)=0$ for any commutative ring $A$ - this is a combination of \cite[Theorem 3]{dt} and \cite[Corollary 2.3]{et}.
    
\end{itemize}

\end{proposition}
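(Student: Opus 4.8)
Since every assertion here is drawn from the literature (\cite{hh,em,dt,et}), my plan is to recall the mechanism behind each equality rather than to rebuild the machinery, and then simply to invoke the cited statements. The three identities in part (i) all follow a single template, so I would prove the projective case in detail and indicate the injective case as its dual and the flat case as the analogue from \cite{em}.

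For $\Findim(R) = \FinGpd(R)$, one inequality is formal. If $\pd_R(M) < \infty$, then $\Gpd_R(M) = \pd_R(M)$, because finite projective dimension forces the Gorenstein projective dimension to agree with it; hence every finite-projective-dimension module lies in the class computing $\FinGpd(R)$ with unchanged value, giving $\Findim(R) \le \FinGpd(R)$. For the reverse inequality I would use Holm's approximation: any $M$ with $\Gpd_R(M) = n < \infty$ sits in a short exact sequence $0 \to M \to L \to G \to 0$ with $G \in \GProj(R)$ and $\pd_R(L) = n$. Concretely, one takes a Gorenstein projective precover $0 \to K \to G_0 \to M \to 0$ with $\pd_R(K) = n-1$, embeds $G_0$ into a projective with Gorenstein projective cokernel, and forms the pushout; the resulting $L$ then has $\pd_R(L) = n$. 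Since $L$ has finite projective dimension exactly $n$, it contributes to $\Findim(R)$, so $n \le \Findim(R)$ and hence $\FinGpd(R) \le \Findim(R)$. The identity $\fidim(R) = \fgidim(R)$ runs dually, with cosyzygies, injective modules, and Gorenstein injective approximations in place of their projective counterparts, and $\ffdim(R) = \FinGfd(R)$ is exactly \cite[Proposition 2.3]{em}, obtained the same way with Gorenstein flat approximations.

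For part (ii) I would treat the two displayed equivalences as the two quoted characterisations of finiteness and verify only the soft implications myself. If $G$ is finite, the norm element assembles a complete resolution of the trivial module, so $A$ is Gorenstein projective over $AG$ and $\Gcd_A(G) = \Gpd_{AG}(A) = 0$ for every commutative $A$; likewise a (co)induction comparison gives $\spli(\mathbb{Z}G) = \spli(\mathbb{Z}) = 1$, the value $1$ coming from $\mathbb{Z}$ being hereditary with some injective (e.g. $\mathbb{Q}$) of projective dimension exactly $1$. The two substantive converses, namely that $\spli(\mathbb{Z}G) = 1$ forces $G$ finite and that $\Gcd_A(G) = 0$ forces $G$ finite, are precisely \cite[Theorem 3]{dt} and \cite[Corollary 2.3]{et}, which I would invoke directly.

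The only genuinely hard content is concentrated in these reverse directions: the inequality $\FinGpd(R) \le \Findim(R)$ rests on the existence of the Gorenstein approximation sequences, and the two converses in (ii) are deep finiteness theorems. This is exactly why the proposition is recorded with references to \cite{hh,em,dt,et} rather than proved from first principles.
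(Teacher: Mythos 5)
Your proposal is correct and matches the paper's treatment: the paper offers no proof of this proposition beyond the citations to \cite{hh}, \cite{em}, \cite{dt} and \cite{et}, and you invoke exactly those sources for the substantive content while accurately sketching their mechanisms (Holm's approximation sequence via the pushout for $\FinGpd(R)\leq\Findim(R)$, and the soft implications from finiteness of $G$). Nothing in your outline would fail, so there is no gap to flag.
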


There are several close, but not exact, analogues of Proposition \ref{finchar}.(ii) characterizing groups with Gorenstein homological dimension zero, but none that deals with any fixed commutative base ring of coefficients. We establish a result of this type in Corollary \ref{ghd0char} which was highlighted in Section \ref{s0}.

\subsection{Group theoretic background}

As mentioned in Section \ref{s0}, most of the assumptions that we will place on our groups will be in terms of their position in Kropholler's hierarchy. We give an explicit definition of this hierarchy below.

\begin{definition}\cite{krop94}\label{hx-def} Let $\mathscr{X}$ be a class of groups,

$\hh_0\mathscr{X}:=\mathscr{X}$.

Inductively, for any successor ordinal $\alpha$, $\hh_{\alpha}\mathscr{X}$ is defined in the following way - a group $G$ is said to be in $\hh_{\alpha}\mathscr{X}$ iff there exists a finite-dimensional contractible CW-complex on which $G$ acts with stabilizers in $\hh_{\alpha-1}\mathscr{X}$.

For any ordinal $\alpha$, $\hh_{<\alpha}\mathscr{X}:=\bigcup_{\beta<\alpha}\hh_{\beta}\mathscr{X}$. When $\alpha$ is a limit ordinal, $\hh_{\alpha}\mathscr{X}:= \hh_{<\alpha}\mathscr{X}$. 

A group $G$ is said to be in $\hh\mathscr{X}$ if $G \in \hh_{\alpha}\mathscr{X}$ for some ordinal $\alpha$, and it is said to be in $\LH\mathscr{X}$ iff every finitely generated subgroup of it is in $\hh\mathscr{X}$. 
    
\end{definition}

\begin{remark}\textbf{General facts, egs, and non-egs about groups in the hierarchy:} It is easy to see that, for any fixed $\mathscr{X}$, $\hh_{\alpha}\mathscr{X}\subseteq \hh_{\beta}\mathscr{X}$, if $\alpha<\beta$. In fact, if $\mathscr{X}=\mathscr{F}$, the class of all finite groups, it was shown in \cite{jkl} that $\hh_{<\alpha}\mathscr{F}$ is strictly smaller than $\hh_{\alpha}\mathscr{F}$ for all ordinals $\alpha\leq \omega_1$, the first uncountable ordinal. $\LH\mathscr{F}$ and $\hh\mathscr{F}$ are very large classes of groups. $\LH\mathscr{F}$ contains all soluble-by-finite groups, all linear groups, all groups with finite-dimensional models for their classifying space of proper actions, and so on. Very few groups are known to lie outside $\LH\mathscr{F}$ or $\hh\mathscr{F}$ - some notable examples being Thompson's group $F$ \cite{krop94} or the first Grigorchuk group \cite{gandini}. 
    
\end{remark}

\begin{motivation}\label{char-history} \textbf{(The module $B(G,A)$ and the background behind characteristic and weak characteristic modules)} For any group $G$ and any commutative ring $A$, it is well-known that we have an $AG$-module, written $B(G,A)$, formed of those functions $G \rightarrow A$ with finite range, and it is also known \cite{ben1,ck97} that $B(G,A)$ is $A$-free and free over $AH$ for any finite subgroup $H\leq G$, and that embedding the constant functions, we get an $A$-split $AG$-monomorphism $A \hookrightarrow B(G,A)$. If $G$ is taken to be a type $\Phi$ group over $A$ (recall from Section \ref{s0} that these are those groups where any $AG$-module has finite projective dimension iff its restriction to every finite subgroup has finite projective dimension), then it is obvious that $\pd_{AG}B(G,A)<\infty$. Over the last three decades, the module $B(G,A)$ has played a very central role in developments in the understanding of cohomological properties of infinite groups or the closed model category structure on modules over group algebras - one can, for example, look at the papers that have used the machinery developed in \cite{ben1}, \cite{ck97}, and other papers.
\end{motivation}

Motivated by the facts summarised in Motivation \ref{char-history}, Talelli \cite{tal-char} introduced (over $\mathbb{Z}$) the concept of a characteristic module for a group, and on a similar vein, the second author introduced the notion of a weak characteristic module for a group in \cite{dds2}. 

\begin{definition}\label{char-wchar} Let $A$ be a commutative ring and let $G$ be a group.
\begin{itemize}
    \item[(i)] $G$ is said to admit a characteristic module over $A$ (when the $A$ is clear from context, we omit ``over $A$") if there is an $AG$-module $T$ such that $\first$ $T$ is $A$-free, $\ditio$ there is an $A$-split $AG$-monomorphism $A \rightarrow T$, and $\tritio$ $\pd_{AG}(T)<\infty$. With $A$ fixed and clear from the context, the class of all groups admitting a characteristic module is denoted $\mathscr{X}_{\Char}$.

    \item[(ii)] $G$ is said to admit a weak characteristic module over $A$ (again, we omit ``over $A$" if it is clear from the context) if there is an $AG$-module $S$ such that $\first$ $S$ is $A$-flat, $\ditio$ there is an $A$-pure $AG$-monomorphism $A \rightarrow S$, and $\tritio$ $\flatdim_{AG}(S)<\infty$. With $A$ fixed and clear from the context, the class of all groups admitting a weak characteristic module is denoted $\mathscr{X}_{\WChar}$. 
\end{itemize} 
\end{definition}

 Taking $\mathscr{X}_{\Char}$ as the base class in Kropholler's hierarchy, much has been studied about the behaviour of (co)homological invariants and related questions for groups in $\LH\mathscr{X}_{\Char}$ in recent papers like \cite{dds,dds2}. And, taking $\mathscr{X}_{\WChar}$ as the base class in Kropholler's hierarchy, a lot has been studied about the behaviour of (co)homological invariants and related questions for groups in $\LH\mathscr{X}_{\WChar}$ by the second author in \cite{dds2}. Clearly, $\{$Finite groups$\}\subset \{$Type $\Phi$ groups$\}\subseteq \mathscr{X}_{\Char}\subseteq \mathscr{X}_{\WChar}$, keeping a commutative ring $A$ fixed.

\section{New results}\label{s2}

\subsection{Gorenstein projective version}\label{s20}

The main goal of this subsection is to prove the result highlighted in Theorem \ref{summary}.(i):

\begin{theorem}\label{m1} Let $A$ be a commutative ring such that $\spli(A)<\infty$. Let $G$ be a group such that $\GProj(A) \bigcap \GProj^{<\infty}(AG) \subseteq \GProj(AG)$. Now, assume that $G \in \LH\mathscr{X}_{\Char}$, then $G$ has to be finite.

\end{theorem}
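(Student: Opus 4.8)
The plan is to deduce finiteness of $G$ from the single numerical statement $\Gcd_A(G)=\Gpd_{AG}(A)=0$, using the characterization of finite groups in Proposition \ref{finchar}.(ii) ($G$ finite $\iff \Gcd_A(G)=0$ for any commutative $A$). To produce $\Gcd_A(G)=0$ I would feed the trivial module into the standing hypothesis $\GProj(A)\cap\GProj^{<\infty}(AG)\subseteq\GProj(AG)$, reading $\GProj(A)$ as the class of those $AG$-modules whose underlying $A$-module is Gorenstein projective. Since $A$ is $A$-free, hence projective, hence Gorenstein projective over $A$, membership $A\in\GProj(A)$ is automatic. Thus the whole argument reduces to verifying the finiteness $A\in\GProj^{<\infty}(AG)$, i.e. $\Gcd_A(G)<\infty$: once that is known, the hypothesis promotes $A$ to a genuine $AG$-Gorenstein projective, forcing $\Gpd_{AG}(A)=0$.

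The heart of the proof is therefore the finiteness $\Gcd_A(G)<\infty$ under the two remaining assumptions $\spli(A)<\infty$ and $G\in\LH\mathscr{X}_{\Char}$, and I would establish it in two stages. For the base class, a group $H\in\mathscr{X}_{\Char}$ carries a characteristic module $T$: it is $A$-free, it receives an $A$-split $AH$-monomorphism $A\hookrightarrow T$, and $\pd_{AH}(T)<\infty$. Dimension-shifting along the $A$-split short exact sequence $0\to A\to T\to T/A\to 0$, together with $\spli(A)<\infty$ (which controls how finite projective dimension over the base ring propagates to $AH$ and makes the relevant complete/Tate cohomological machinery available), yields $\Gcd_A(H)<\infty$, with the bound essentially governed by $\pd_{AH}(T)$ and $\spli(A)$; this is precisely the role for which the characteristic module was designed in Talelli's work \cite{tal-char} and its generalizations.

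For the second stage I would climb Kropholler's hierarchy. Since $G\in\LH\mathscr{X}_{\Char}$, every finitely generated subgroup lies in $\hh_\alpha\mathscr{X}_{\Char}$ for some ordinal $\alpha$, and finiteness of $\Gcd_A$ is stable under the operations building the hierarchy: if a group acts on a finite-dimensional contractible CW-complex with stabilizers of finite Gorenstein cohomological dimension over $A$, then resolving the trivial module through the finite-length cellular chain complex and assembling the contributions of the modules induced from the stabilizers keeps $\Gcd_A$ finite at the next stage, so transfinite induction gives finiteness throughout $\hh\mathscr{X}_{\Char}$, and the local-to-global behaviour for $\LH$-classes extends this to $G$ itself. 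This hierarchy-climbing finiteness is exactly the kind of statement developed for $\LH\mathscr{X}_{\Char}$-groups in \cite{dds,dds2}, which I would quote rather than re-derive.

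With $\Gcd_A(G)<\infty$ in hand the argument closes at once: $A\in\GProj(A)\cap\GProj^{<\infty}(AG)\subseteq\GProj(AG)$, hence $\Gpd_{AG}(A)=0$, i.e. $\Gcd_A(G)=0$, whence $G$ is finite by Proposition \ref{finchar}.(ii). I expect the only genuine obstacle to be the finiteness $\Gcd_A(G)<\infty$ across the whole hierarchy: one must check that the characteristic-module bound for the base class is correctly calibrated against $\spli(A)<\infty$, and that merely \emph{stage-wise} finiteness of $\Gcd_A$ (not a uniform bound over the stabilizers) suffices, since the ordinal height of $\LH\mathscr{X}_{\Char}$ can be large. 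Everything after that point is formal.
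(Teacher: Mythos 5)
Your endgame coincides with the paper's, but the route you take to the crucial membership $A\in\GProj^{<\infty}(AG)$ has a genuine gap: it is false that $\spli(A)<\infty$ and $G\in\LH\mathscr{X}_{\Char}$ alone force $\Gcd_A(G)<\infty$. Finiteness of $\Gcd_A$ does not survive the operations building the hierarchy unless one has a \emph{uniform} bound over the stabilizers, and it certainly does not pass through the local(-to-global) step. Concretely, take $A=\mathbb{Z}$ and $G=\bigoplus_{n\in\mathbb{N}}\mathbb{Z}$: every finitely generated subgroup is some $\mathbb{Z}^n$, whose group ring is a Laurent polynomial ring of finite global dimension, so $\mathbb{Z}^n$ is of type $\Phi$ and lies in $\mathscr{X}_{\Char}$; hence $G\in\LH\mathscr{X}_{\Char}$ and $\spli(\mathbb{Z})=1<\infty$. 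Yet $\Gcd_{\mathbb{Z}}(\mathbb{Z}^n)=\pd_{\mathbb{Z}[\mathbb{Z}^n]}(\mathbb{Z})=n$, and since $\Gcd_{\mathbb{Z}}$ (equivalently, $\spli$ of the group ring, which dominates $\spli(\mathbb{Z}[\mathbb{Z}^n])\geq n$) is monotone under passage to subgroups, $\Gcd_{\mathbb{Z}}(G)=\infty$. You flagged exactly this worry yourself --- stage-wise finiteness versus a uniform bound over stabilizers --- but proposed to discharge it by citing \cite{dds,dds2}; no such blanket finiteness statement exists in those papers, because it cannot. (Your base-class stage is fine: $H\in\mathscr{X}_{\Char}$ together with $\spli(A)<\infty$ does give $\Gcd_A(H)<\infty$, via $\spli(AH)=\silp(AH)<\infty$; it is the climb that fails.)

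The repair is precisely the structure of the paper's proof, in which the containment hypothesis must be used \emph{twice}, not once. First (the paper's Lemma \ref{m01}): for any $AG$-module $M$ with $\Gpd_{AG}(M)<\infty$, the $n$-th syzygy $K_n$ with $n=\spli(A)$ satisfies $K_n\in\GProj(A)\cap\GProj^{<\infty}(AG)\subseteq\GProj(AG)$ --- the point being that every $A$-module has Gorenstein projective dimension at most $n$ --- whence $\Gpd_{AG}(M)\leq n$ and $\Findim(AG)=\FinGpd(AG)\leq n$. What the hierarchy hypothesis then actually buys is not finiteness of $\Gcd_A(G)$ but the \emph{equality of invariants} $\spli(AG)=\Findim(AG)$ for $G\in\LH\mathscr{X}_{\Char}$ when $\spli(A)<\infty$ (\cite[Remark 6.9.(ii)]{dds2}); that equality is useless without the first application of the hypothesis, and combined with it gives $\spli(AG)<\infty$, which is equivalent to $\Gcd_A(G)<\infty$ by \cite[Theorem 2.14]{dds} (together with $\silp(AG)=\spli(AG)$). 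From there your final paragraph goes through verbatim: $A\in\GProj(A)\cap\GProj^{<\infty}(AG)\subseteq\GProj(AG)$, so $\Gcd_A(G)=0$ and $G$ is finite by Proposition \ref{finchar}.(ii).
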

First, we record a small lemma without using the ``$G \in \LH\mathscr{X}_{\Char}$" assumption that will be useful in proving Theorem \ref{m1}.

\begin{lemma}\label{m01} Let $A$ be a commutative ring such that $\spli(A)<\infty$, and let $G$ be a group such that $\GProj(A) \bigcap \GProj^{<\infty}(AG) \subseteq \GProj(AG)$. Then, $\Findim(AG) \leq \spli(A)$.
\end{lemma}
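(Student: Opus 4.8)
The plan is to prove the equivalent statement that every $AG$-module $M$ with $\pd_{AG}(M)<\infty$ satisfies $\pd_{AG}(M)\le\spli(A)$. Write $n=\spli(A)$ and, having fixed such an $M$, set $N=\Omega^n_{AG}(M)$, the $n$-th syzygy in some $AG$-projective resolution of $M$. Then $\pd_{AG}(N)=\max\{\pd_{AG}(M)-n,0\}<\infty$, so in particular $\Gpd_{AG}(N)\le\pd_{AG}(N)<\infty$ and hence $N\in\GProj^{<\infty}(AG)$. The entire argument then reduces to showing that $N$ also lies in $\GProj(A)$: once this is established, the hypothesis $\GProj(A)\cap\GProj^{<\infty}(AG)\subseteq\GProj(AG)$ forces $N\in\GProj(AG)$, and since a Gorenstein projective module of finite projective dimension is projective, $N$ is $AG$-projective. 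This says exactly that $\pd_{AG}(M)\le n$, which is the desired bound, and taking the supremum over all such $M$ yields $\Findim(AG)\le\spli(A)$.

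To see that $N\in\GProj(A)$, I would restrict scalars along $A\hookrightarrow AG$. Since $AG$ is free as an $A$-module, every $AG$-projective is $A$-projective, so the chosen $AG$-projective resolution of $M$ restricts to an $A$-projective resolution; consequently $N$, viewed as an $A$-module, is the $n$-th syzygy $\Omega^n_A(M)$. Moreover $\pd_A(M)\le\pd_{AG}(M)<\infty$, again because a finite $AG$-projective resolution restricts to a finite $A$-projective resolution. The key input is the finitistic-dimension estimate $\Findim(A)\le\spli(A)=n$, which is precisely where the assumption $\spli(A)<\infty$ is used, and which gives $\pd_A(M)\le n$. Hence $\Omega^n_A(M)$ is $A$-projective, and in particular $N\in\GProj(A)$.

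The main obstacle is this step $N\in\GProj(A)$, and specifically the bound $\Findim(A)\le\spli(A)$ that powers it. Indeed, over $A$ the module $M$ only has \emph{finite} projective dimension, so its $n$-th syzygy is Gorenstein projective exactly when it is projective, i.e. exactly when $\pd_A(M)\le n$; thus $\Findim(A)\le\spli(A)$ is not a mere convenience but is logically equivalent to the $G=\{1\}$ case of the lemma and must be invoked or proved here. The remaining ingredients — that restriction along $A\hookrightarrow AG$ preserves projectivity and syzygies, that finite projective dimension implies finite Gorenstein projective dimension, and that Gorenstein projective modules of finite projective dimension are projective — are standard; it is their combination with the hypothesis that upgrades the $A$-level bound $\pd_A(M)\le n$ to the $AG$-level bound $\pd_{AG}(M)\le n$, which the example of $A=\mathbb{Z}$, $G=\mathbb{Z}$ shows would fail without the Gorenstein hypothesis.
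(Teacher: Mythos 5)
Your proof is correct, and while it follows the same dimension-shifting skeleton as the paper's proof (pass to the $n$-th kernel of a resolution, verify it lies in $\GProj(A)\bigcap\GProj^{<\infty}(AG)$, and apply the hypothesis), the supporting ingredients are genuinely different. The paper starts from a module of finite \emph{Gorenstein} projective dimension, uses $\silp(A)\le\spli(A)$ (valid as $A$ is commutative, \cite[Corollary 24]{de}) together with \cite[Theorem 2.2.$(\gamma)$]{br} to conclude that \emph{every} $A$-module has $\Gpd_A\le n$, so the $n$-th kernel lies in $\GProj(A)$; this gives $\FinGpd(AG)\le n$, and the proof closes with Holm's equality $\Findim(AG)=\FinGpd(AG)$ \cite[Theorem 2.28]{hh}. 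You instead work throughout with ordinary projective dimension: your $n$-th syzygy is shown to be actually $A$-projective, and you finish with the elementary fact that a Gorenstein projective module of finite projective dimension is projective, obtaining $\Findim(AG)\le n$ directly and avoiding both Beligiannis--Reiten and Holm's theorem. The price, as you correctly identify, is the estimate $\Findim(A)\le\spli(A)$, which you invoke without proof; it is indeed indispensable (being precisely the $G=\{1\}$ case of the lemma), but it is true and easily discharged: $\Findim(R)\le\silp(R)$ holds for every ring by exactly the Ext-nonvanishing argument the paper uses to prove $\fidim(R)\le\spli(R)$ in Lemma \ref{lemb}.(i) (if $\pd_R(M)=m\ge 1$, then $\Ext^m_R(M,P)\neq 0$ for the last projective $P$ in a length-$m$ resolution, since otherwise the penultimate syzygy would split off and shorten the resolution; hence some projective has injective dimension $\ge m$), and $\silp(A)\le\spli(A)$ for commutative $A$ is \cite[Corollary 24]{de} --- the very same input the paper feeds into Beligiannis--Reiten. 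With that reference supplied your argument is complete and, if anything, more elementary and self-contained than the paper's; the paper's route yields the formally stronger intermediate bound $\FinGpd(AG)\le\spli(A)$, but by \cite[Theorem 2.28]{hh} the two conclusions coincide, and only $\Findim(AG)<\infty$ is used downstream in the proof of Theorem \ref{m1}, so nothing is lost by your version.
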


\begin{proof} Let $\spli(A)=n$. Note that the supremum over the Gorenstein projective dimension of all $A$-modules is exactly $n$ - this is because, in this case, $\silp(A)\leq n$ as $A$ is commutative (this can be checked directly but, for a reference, see Corollary $24$ of \cite{de}), and then we can just invoke Theorem 2.2.$(\gamma)$ of \cite{br}. Now, let $M$ be an $AG$-module such that $\Gpd_{AG}(M)<\infty$. Let $P_{*}$ be an $AG$-projective resolution of $M$ where $K_n$ is the $n$-th kernel, so we have an exact sequence $$0 \rightarrow K_n \rightarrow P_{n-1} \rightarrow \cdots\rightarrow P_0 \rightarrow M \rightarrow 0.$$ Here, $K_n \in \GProj^{<\infty}(AG)$. Also, at the same time, since $\Gpd_A(M)\leq n$ and each $P_i \in \GProj(A)$ due to being $A$-projective, $K_n \in \GProj(A)$. This gives us $K_n \in \GProj(A) \bigcap \GProj^{<\infty}(AG) \subseteq \GProj(AG)$. So, $\Gpd_{AG}(M)\leq n$. Thus, $\FinGpd(AG)\leq n$. But, recall from Proposition \ref{finchar}.(i) that $\Findim(AG)=\FinGpd(AG)$, so $\Findim(AG)\leq n$.\end{proof}

\begin{proof}[Proof of Theorem \ref{m1}] Lemma \ref{m01} tells us that $\Findim(AG)<\infty$. Since $G \in \LH\mathscr{X}_{\Char}$ and $\spli(A)<\infty$, invoking \cite[Remark 6.9.(ii)]{dds2} we obtain that $\spli(AG)=\Findim(AG)<\infty$. Now, again as $\spli(A)<\infty$, we have $\spli(AG)<\infty$ (i.e. $\silp(AG)=\spli(AG)<\infty$ \cite[Corollary 24]{de}) $\Longleftrightarrow \Gcd_A(G) (= \Gpd_{AG}(A))<\infty$ (this equivalence is due to \cite[Theorem 2.14]{dds}).  Clearly, $A \in \GProj(A)$, so we have that $A \in \GProj(A) \bigcap \GProj^{<\infty}(AG)$, and now by the hypothesis, we have that $A \in \GProj(AG)$, i.e. $\Gcd_A(G)=0$. Thus, we are done by Proposition \ref{finchar}.(ii).  \end{proof}

Before proceeding towards the flat version of Theorem \ref{m1}, we need to take a small detour and prove a result regarding the Gorenstein homological dimension of groups.

\subsection{What does $\Ghd_A(G)=0$ imply?}\label{s21}
We start with some results from a recent paper by Kaperonis and the second author \cite{ks} that will be of use to us in this subsection. 

\begin{lemma}\label{lemma-ks} Let $A$ be a commutative ring and let $G$ be a group. Then,
\begin{itemize}
    \item[(i)]If $\sfli(A)<\infty$, $\Ghd_A(H) \leq \Ghd_A(G)$, for any subgroup $H \leq G$ - see \cite[Proposition 4.1]{ry} or \cite[Proposition 5.11]{ks}.
    \item[(ii)]$\mm \{\Ghd_A(G),\sfli(A)\} \leq \sfli(AG) \leq \Ghd_A(G)+\sfli(A)$ \cite[Corollary 6.2]{ks}.
\end{itemize}
\end{lemma}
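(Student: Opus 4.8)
The plan is to give self-contained arguments for the two cited inequalities, since both reduce to standard change-of-rings manipulations once the correct functorial inputs are isolated. The three tools I would set up first are: (a) $AG$ is free as a module over $A$ and over $AH$ for any $H \le G$, so $\Res^{AG}_{AH}$ and $\Res^{AG}_A$ are exact and carry flats to flats, while induction $\Ind = AG \otimes_A (-)$ is exact and its right adjoint $\Res^{AG}_A$ therefore preserves injectives; (b) the untwisting isomorphism $AG \otimes_A M \cong AG \otimes_A \Res^{AG}_A M$ (diagonal action on the left, left-factor action on the right), so that any diagonal tensor with $AG$ is induced from $A$; and (c) the syzygy principle that, when $\sfli$ of the ring is finite and equal to $m$, every $m$-th flat syzygy is Gorenstein flat, so that $\Gfd$ of every module is bounded by $m$.

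For part (i), I would show that $\Res^{AG}_{AH}$ carries $\GFlat(AG)$ into $\GFlat(AH)$. Given a complete flat resolution of a Gorenstein flat $AG$-module, restricting it along $AH \hookrightarrow AG$ keeps it a complete flat resolution over $AH$: exactness and flatness of the terms are immediate from (a), and the tensor-acyclicity against injective right $AH$-modules is where $\sfli(A) < \infty$ enters, since it lets one reduce the test to injectives arising from $AG$ via the restriction/coinduction adjunction. Applying this to a Gorenstein flat $AG$-resolution of the trivial module of length $\Ghd_A(G)$ — whose restriction to $AH$ is again a resolution of the trivial module — yields $\Ghd_A(H) = \Gfd_{AH}(A) \le \Gfd_{AG}(A) = \Ghd_A(G)$; note that the hypothesis is vacuous unless $\Ghd_A(G)$ is finite.

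For the lower bounds in part (ii), I would argue separately. For $\sfli(A) \le \sfli(AG)$, take an $A$-injective $E$; then $\Coind E = \Hom_A(AG, E)$ is $AG$-injective, so $\flatdim_{AG}(\Coind E) \le \sfli(AG)$, and restricting a flat $AG$-resolution using (a) gives $\flatdim_A(\Res \Coind E) \le \flatdim_{AG}(\Coind E)$; since the constant-function inclusion $E \hookrightarrow \Res \Coind E$ is $A$-split by evaluation at $1 \in G$, we get $\flatdim_A(E) \le \sfli(AG)$, hence $\sfli(A) \le \sfli(AG)$. For $\Ghd_A(G) \le \sfli(AG)$, if $\sfli(AG) = m < \infty$ then by (c) the trivial module satisfies $\Gfd_{AG}(A) \le m$, i.e. $\Ghd_A(G) \le \sfli(AG)$.

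The upper bound $\sfli(AG) \le \Ghd_A(G) + \sfli(A)$ is where the main work lies. Starting from an $AG$-injective $I$, tool (a) gives that $\Res I$ is $A$-injective, so $\flatdim_A(\Res I) \le \sfli(A) =: s$. I would then splice a length-$s$ flat $A$-resolution of $\Res I$ against a length-$n$ Gorenstein flat $AG$-resolution of the trivial module, where $n := \Ghd_A(G)$, forming the total complex of the associated double complex; using (b) together with the stability of Gorenstein flatness under $\otimes_A$ with an $A$-flat module, the total complex should be a Gorenstein flat $AG$-resolution of $I$ of length $n + s$, giving $\Gfd_{AG}(I) \le n + s$. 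The hard part, and the step I expect to be the real obstacle, is twofold: first, verifying that the diagonal tensor products appearing as the terms of the double complex genuinely remain Gorenstein flat over $AG$ (this is the Gorenstein analogue of ``flat $\otimes_A$ flat is flat,'' and needs the untwisting isomorphism together with a closure property of $\GFlat(AG)$); and second, converting the resulting bound on $\Gfd_{AG}(I)$ into the required bound on the ordinary $\flatdim_{AG}(I)$ that defines $\sfli(AG)$ — this last reconciliation, which is exactly the delicate point handled in \cite{ks}, is where the finiteness of $\sfli(A)$ and the syzygy principle (c) would be used once more.
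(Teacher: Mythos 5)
First, for calibration: the paper does not prove this lemma at all --- both parts are quoted verbatim from \cite{ry} and \cite{ks} --- so your attempt must be judged against the cited literature rather than an in-paper argument. The pieces of your sketch that are complete are correct: both lower bounds in (ii) are fine as written (the counit $\Res\Coind E\to E$ is $A$-split by the constant-function section, and $\Ghd_A(G)\leq\sfli(AG)$ follows from \cite[Theorem 2.4]{cet21} since $AG\cong(AG)^{\op}$, so your ``syzygy principle'' (c) is legitimate for the self-opposite rings occurring here). The genuine gap in part (i) is that the mechanism you invoke is the wrong one, and in your ordering the argument is circular. The total-acyclicity test over $AH$ transforms under \emph{induction}, not coinduction: for an injective right $AH$-module $J$ one has $J\otimes_{AH}\Res F_{*}\cong(J\otimes_{AH}AG)\otimes_{AG}F_{*}$, and the way to conclude acyclicity is to know that the induced module $J\otimes_{AH}AG$ has finite flat dimension over $AG$ (an acyclic complex of flats stays acyclic after tensoring with any module of finite flat dimension, by dimension shifting along a finite flat resolution). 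But $\flatdim_{AG}(J\otimes_{AH}AG)\leq\flatdim_{AH}(J)\leq\sfli(AH)\leq\sfli(AG)$, and the finiteness of this last quantity is exactly the upper bound of part (ii) combined with $\Ghd_A(G)<\infty$; the hypothesis $\sfli(A)<\infty$ alone gives no control over $\sfli(AH)$ for an arbitrary subgroup. So either (i) must be proved \emph{after} the upper bound in (ii), or one must import the independent argument of \cite[Proposition 4.1]{ry} or \cite[Proposition 5.11]{ks}; the phrase ``reduce the test to injectives arising from $AG$ via the restriction/coinduction adjunction'' does not produce a proof. (Note the contrast with the base-ring case $H=\{1\}$, where $\flatdim_{AG}(E\otimes_A AG)\leq\flatdim_A(E)\leq\sfli(A)$ does follow directly from the hypothesis --- this is precisely why restriction to $A$ of Gorenstein flat $AG$-modules is unproblematic while restriction to a general $H$ is not.)

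For the upper bound in (ii), the two steps you yourself flag as obstacles are genuine gaps, and your proposed repair for the second one points in the wrong direction. Principle (c) converts finiteness of $\sfli$ into bounds on Gorenstein flat dimension; what you need is the opposite passage, from $\Gfd_{AG}(I)\leq n+s$ to $\flatdim_{AG}(I)\leq n+s$, and the inequality $\flatdim\leq\Gfd$ is simply false for general modules (over a quasi-Frobenius ring every module is Gorenstein flat while non-flat modules abound), so no further appeal to (c) can supply it. What actually closes the argument is the injectivity of $I$ itself. Already in the base case $\Ghd_A(G)=0$: take a totally acyclic complex $T_{*}$ of flat $AG$-modules with $A$ as a kernel and tensor it over $A$, diagonally, with $\Res_A I$. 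The untwisting (b) gives $\flatdim_{AG}(T_i\otimes_A I)\leq\flatdim_A(\Res_A I)\leq s$; the complex stays exact because the kernels of $T_{*}$ restrict to Gorenstein flat $A$-modules (this uses $\sfli(A)<\infty$ via the base-ring case above together with \cite{bennis} and \cite{ss}), which are Tor-orthogonal to modules of finite flat dimension; and then the monomorphism $I\to T_{-1}\otimes_A I$ \emph{splits because $I$ is injective}, exhibiting $I$ as a direct summand of a module of flat dimension at most $s$, whence $\flatdim_{AG}(I)\leq s$ directly --- no detour through $\Gfd_{AG}(I)$ is needed or usable. Some device of this kind (it is in effect what \cite{ks} does, with an induction on $n$ handling the general case) is indispensable; without it, and without a proof of the diagonal stability of Gorenstein flatness under $\otimes_A$ with $A$-flat modules that you also leave open, the sketch of the upper bound does not close.
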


The results collected in Lemma \ref{lem2} are quite derivative of some results from \cite{et2011, de} but not all of them have appeared in the form below.
 
\begin{lemma}\label{lem2} Let $A$ be a commutative ring and let $G$ be a group such that the group ring $AG$ is $\aleph_0$-Noetherian. Then,
\begin{itemize}
\item[(i)] $\silp(AG)=\spli(AG)$ - this follows from \cite[Corollary 27]{de}.
\item[(ii)] $\sfli(AG)\leq \silp(AG)=\spli(AG)\leq \sfli(AG)+1\leq \Ghd_A(G)+\sfli(A)+1$. 
\item[(iii)]$\id_{AG}(AG)\leq \Ghd_A(G)+\sfli(A)+1.$
\end{itemize}
\end{lemma}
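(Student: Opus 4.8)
The plan is to treat (i) as an immediate citation of \cite[Corollary 27]{de}, then deduce (ii) by assembling a chain of comparisons around the central equality $\silp(AG)=\spli(AG)$ supplied by (i), and finally obtain (iii) as a direct consequence of (ii). The only genuinely new work sits inside (ii); everything else is a matter of invoking results already recorded in the excerpt.

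For (ii) I would argue as follows. The leftmost inequality $\sfli(AG)\le\silp(AG)$ is formal: for \emph{any} module the flat dimension is bounded above by the projective dimension, so taking the supremum over the injectives gives $\sfli(AG)\le\spli(AG)$, and then (i) rewrites $\spli(AG)$ as $\silp(AG)$. The central equality is exactly (i). The step I regard as the crux is $\spli(AG)\le\sfli(AG)+1$, and this is where the $\aleph_0$-Noetherian hypothesis is essential. The key input, drawn from the circle of results in \cite{et2011, de}, is that over an $\aleph_0$-Noetherian ring every flat module has projective dimension at most $1$. Granting this, if $I$ is an $AG$-injective with $\flatdim_{AG}(I)=d\le\sfli(AG)$, I take a flat resolution $0\to F_d\to\cdots\to F_0\to I\to 0$ of length $d$ in which every $F_j$ is flat, hence of projective dimension $\le 1$; a routine dimension-shift along this resolution (splitting off the successive kernels and using $\pd_{AG}(I)\le\max\{\pd_{AG}(F_0),\,\pd_{AG}(K_0)+1\}$ inductively) then yields $\pd_{AG}(I)\le d+1$. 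Taking the supremum over all injectives gives $\spli(AG)\le\sfli(AG)+1$. The rightmost inequality $\sfli(AG)+1\le\Ghd_A(G)+\sfli(A)+1$ is obtained by adding $1$ to the upper bound $\sfli(AG)\le\Ghd_A(G)+\sfli(A)$ furnished by Lemma \ref{lemma-ks}(ii).

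Finally, for (iii) I would observe that $AG$ is a free, hence projective, $AG$-module, so $\id_{AG}(AG)\le\silp(AG)$ directly from the definition of $\silp$; combining this with the chain established in (ii) gives $\id_{AG}(AG)\le\silp(AG)\le\Ghd_A(G)+\sfli(A)+1$.

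The main obstacle is the inequality $\spli(AG)\le\sfli(AG)+1$, and more precisely pinning down and correctly invoking the statement that flat modules over an $\aleph_0$-Noetherian ring have projective dimension at most $1$; once that fact is in hand the remainder is bookkeeping with standard dimension inequalities together with the already-cited Lemma \ref{lemma-ks}(ii) and part (i).
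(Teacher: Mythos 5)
Your proposal is correct, and parts (i), (iii), the leftmost inequality, and the final inequality (adding $1$ to Lemma \ref{lemma-ks}.(ii)) coincide with the paper's argument. Where you genuinely diverge is at the crux $\spli(AG)\leq \sfli(AG)+1$. The paper handles this step without any use of the $\aleph_0$-Noetherian hypothesis: it assumes $\sfli(AG)<\infty$ (the inequality being vacuous otherwise), observes that $AG\cong (AG)^{\op}$ forces $\sfli((AG)^{\op})<\infty$ as well, and then invokes \cite[Theorem 2.9]{et2011}, which yields $\spli(AG)\leq \sfli(AG)+1$ from two-sided finiteness of $\sfli$; in the paper's proof the countability hypothesis is consumed only by part (i), via \cite[Corollary 27]{de}. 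You instead use the $\aleph_0$-Noetherian hypothesis directly, through the fact that over a ring with countably generated (left) ideals every flat module has projective dimension at most $1$; your attribution to ``the circle of results in \cite{et2011,de}'' is vague, and you should pin it down: this is a classical theorem of Jensen (the case $n=0$ of the Gruson--Jensen bound $\pd\leq\flatdim+1$ over $\aleph_0$-Noetherian rings), and it is in fact the mechanism underlying \cite[Corollary 27]{de} itself. Granting that citation, your dimension-shift along a length-$d$ flat resolution of an injective $I$, using $\pd_{AG}(I)\leq\max\{\pd_{AG}(F_0),\pd_{AG}(K_0)+1\}$ inductively, correctly gives $\pd_{AG}(I)\leq d+1$ and hence $\spli(AG)\leq\sfli(AG)+1$. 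The trade-off: your route is more self-contained and makes transparent exactly where countability enters, whereas the paper's route via \cite[Theorem 2.9]{et2011} shows that this particular inequality needs no countability at all, only the self-opposite structure of the group ring --- a sharper localization of hypotheses. Note also that, with Jensen's bound $\pd\leq\flatdim+1$ in hand, you essentially get (i) for free as well, so your approach could be reorganized to prove the whole lemma from one input.
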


\begin{proof}

(ii) As projectives are flat, clearly $\sfli(AG)\leq \spli(AG)$. The ``$\silp(AG)=\spli(AG)$" equality is given by part (i) of this lemma. Now, to show that $\spli(AG)\leq \sfli(AG)+1$, we can assume that $\sfli(AG)<\infty$. As $AG \cong (AG)^{\op}$, we also get that $\sfli((AG)^{\op})<\infty$, and this lets us use \cite[Theorem 2.9]{et2011} to conclude that $\spli(AG)\leq \sfli(AG)+1$. The last inequality, i.e. the claim that $\sfli(AG)+1 \leq \Ghd_A(G)+\sfli(A)+1$, follows from Lemma \ref{lemma-ks}.(ii).

(iii) This is a direct corollary of part (ii) of this lemma as clearly, $\id_{AG}(AG)\leq \silp(AG)$.
\end{proof}

\begin{remark} In the proof of the upcoming Theorem \ref{ghd0}, only part (iii) of the three statements of Lemma \ref{lem2} will be used but, as the reader can notice from the proof of Lemma \ref{lem2}, part (iii) cannot be established without using both parts (i) and (ii).

\end{remark}

Using Lemmas \ref{lemma-ks} and \ref{lem2}, we can achieve a generalization of a 2012 theorem of Emmanouil's \cite[Proposition 2.1]{ejoa}.

\begin{theorem}\label{ghd0} Let $A$ be an $\aleph_0$-Noetherian commutative ring satisfying $\sfli(A)<\infty$ and let $G$ be a finitely generated group. Then, the following conditions are equivalent.
\begin{itemize}
    \item[(i)]$G$ is finite.
    \item[(ii)]$\Ghd_A(G)=0$.
    \item[(iii)]$\hh_i(G,\rule{0.3cm}{0.15mm})$ vanishes on injective $AG$-modules $\forall i\geq 1$, and $\id_{AG}(AG)\leq \sfli(A)+1$.
    \item[(iv)]$\hh_i(G,\rule{0.3cm}{0.15mm})$ vanishes on injective $AG$-modules $\forall i\geq 1$, and $\id_{AG}(AG)< \infty$.
    \item[(v)] $\hh_i(G,\rule{0.3cm}{0.15mm})$ vanishes on injective $AG$-modules $\forall i\geq 1$, and $\exists$ a non-zero induced $AG$-module of finite injective dimension.
\end{itemize}
\end{theorem}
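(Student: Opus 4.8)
The plan is to prove the equivalences by establishing (i)$\Rightarrow$(ii) together with (ii)$\Rightarrow$(i) as an anchor, and then folding in the cohomological conditions through the sub-cycle (ii)$\Rightarrow$(iii)$\Rightarrow$(iv)$\Rightarrow$(v)$\Rightarrow$(ii); this gives all five equivalences at once. Before starting I would record two preliminaries. First, since $A$ is $\aleph_0$-Noetherian and $G$ is finitely generated (hence countable), $AG$ is a countably generated free $A$-module whose $A$-submodules are therefore countably generated, so every left ideal of $AG$ is countably generated and $AG$ is itself $\aleph_0$-Noetherian; this is exactly what licenses the use of Lemma \ref{lem2} throughout. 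Second, group homology is $\hh_i(G,M)\cong\tor_i^{AG}(A,M)$, so the vanishing hypothesis in (iii)--(v) is precisely the vanishing of $\tor_i^{AG}(A,I)$ for injective $I$ and $i\ge 1$.

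For (i)$\Rightarrow$(ii): when $G$ is finite the trivial module $A$ carries a complete resolution over $AG$ (Tate resolutions of finite groups exist over an arbitrary commutative base), so $A\in\GProj(AG)$, and since Gorenstein projectives are Gorenstein flat we get $\Ghd_A(G)=\Gfd_{AG}(A)=0$. For (ii)$\Rightarrow$(i): from $\Ghd_A(G)=0$, Lemma \ref{lemma-ks}(ii) forces $\sfli(AG)=\sfli(A)<\infty$, and then Lemma \ref{lem2}(ii) gives $\spli(AG)=\silp(AG)<\infty$. Thus $AG$ is a ring with both $\spli$ and $\silp$ finite, i.e. a Gorenstein ring in the sense of \cite{ejoa}, over which the Gorenstein projective and Gorenstein flat dimensions of every module coincide; in particular $\Gcd_A(G)=\Gpd_{AG}(A)=\Gfd_{AG}(A)=0$, and Proposition \ref{finchar}(ii) then yields that $G$ is finite. (Equivalently, $\spli(AG)<\infty$ already gives $\Gcd_A(G)<\infty$ by \cite[Theorem 2.14]{dds}, and a Gorenstein flat module of finite Gorenstein projective dimension is Gorenstein projective.)

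The easy edges of the sub-cycle come next. For (ii)$\Rightarrow$(iii): since $A$ is Gorenstein flat, Holm's vanishing theorem gives $\tor_i^{AG}(A,N)=0$ for all $i\ge 1$ whenever $N$ has finite injective dimension, in particular for injective $N$, which is the homology condition; and the bound $\id_{AG}(AG)\le\sfli(A)+1$ is exactly Lemma \ref{lem2}(iii) with $\Ghd_A(G)=0$. The implication (iii)$\Rightarrow$(iv) is immediate since $\sfli(A)+1<\infty$, and (iv)$\Rightarrow$(v) is immediate because $AG=\Ind_1^G A$ is a nonzero induced module, of finite injective dimension by (iv).

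The one substantial step is (v)$\Rightarrow$(ii), which I would split in two. The second half is soft: once $\Ghd_A(G)=\Gfd_{AG}(A)<\infty$ is known, the standard formula $\Gfd_{AG}(A)=\sup\{i:\tor_i^{AG}(A,I)\ne 0\text{ for some injective }I\}$ (valid for modules of finite Gorenstein flat dimension) together with the homology-vanishing hypothesis forces $\Gfd_{AG}(A)=0$, which is (ii). The first half---extracting $\Ghd_A(G)<\infty$, equivalently (Lemma \ref{lemma-ks}(ii)) $\sfli(AG)<\infty$, from the mere existence of one nonzero induced module $M=AG\otimes_A V$ with $\id_{AG}(M)<\infty$---is where I expect the real difficulty, and it is the crux of the theorem, since it is exactly what makes the single witness in (v) as strong as the statement about $AG$ itself in (iv). My approach would be to pass to the character module $M^{+}=\Hom_{\mathbb{Z}}(M,\mathbb{Q}/\mathbb{Z})$: the duality $\flatdim_{AG}(M^{+})=\id_{AG}(M)$ (using that $AG$ is self-opposite) shows $M^{+}$ has finite flat dimension, and since the character dual of an induced module is coinduced, $M^{+}\cong\Coind_1^G(V^{+})$. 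The obstacle is then to convert finite flat dimension of this one coinduced module into the global statement $\sfli(AG)<\infty$; this is precisely where the $\aleph_0$-Noetherian hypothesis on $AG$ must enter (so that flat dimension is detected on countably generated data and the relevant duality is well behaved), in the spirit of the machinery of \cite{ks} and \cite{et2011}. Modulo this extraction, the remaining bookkeeping is routine.
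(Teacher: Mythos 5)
Your cycle differs from the paper's in a way that matters: the paper proves (i)$\Rightarrow$(ii)$\Rightarrow$(iii)$\Rightarrow$(iv)$\Rightarrow$(v)$\Rightarrow$(i), closing at (v)$\Rightarrow$(i) directly, whereas you close at (v)$\Rightarrow$(ii) and add a separate direct proof of (ii)$\Rightarrow$(i) --- and both of your substitute edges are defective. The step (v)$\Rightarrow$(ii) is exactly where you stop: you acknowledge that extracting $\sfli(AG)<\infty$ (equivalently, via Lemma \ref{lemma-ks}.(ii), $\Ghd_A(G)<\infty$) from a single nonzero induced module of finite injective dimension is ``the crux,'' and your character-module sketch ($M^{+}\cong\Coind_1^G(V^{+})$ has finite flat dimension, hence somehow $\sfli(AG)<\infty$) is not an argument --- no result in \cite{ks} or \cite{et2011} upgrades one coinduced module of finite flat dimension to finiteness of $\sfli$. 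The paper never performs this extraction at all: following Emmanouil \cite[Proposition 2.1]{ejoa}, it argues (v)$\Rightarrow$(i) group-theoretically. From (v), the vanishing of $\hh_i(G,-)$ on injectives lets one compute coinvariants through the finite injective coresolution of the induced module $M$, and since $M_G\neq 0$ some injective $I$ has $I_G\neq 0$; but Strebel's theorem (valid over any commutative base ring) plus Ikenaga's observation \cite[Corollary 1.6]{ik} show that an \emph{infinite} finitely generated group has vanishing coinvariants on every injective module. The contradiction forces $G$ finite, with no homological finiteness of $AG$ ever needed from (v).

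Your anchor (ii)$\Rightarrow$(i) also rests on a false principle. It is not true that over a ring with $\spli=\silp<\infty$ the Gorenstein projective and Gorenstein flat dimensions of every module coincide, nor that a Gorenstein flat module of finite Gorenstein projective dimension is Gorenstein projective: take $R=\mathbb{Z}$ (so $\spli=\silp=1$) and $M=\mathbb{Q}$, which is flat (hence $\Gfd_{\mathbb{Z}}(\mathbb{Q})=0$) but has $\Gpd_{\mathbb{Z}}(\mathbb{Q})=\pd_{\mathbb{Z}}(\mathbb{Q})=1$. So from $\Ghd_A(G)=0$ and $\spli(AG)<\infty$ you may conclude $\Gcd_A(G)<\infty$ (by \cite[Theorem 2.14]{dds}), but not $\Gcd_A(G)=0$; whether $\Gcd$ equals $\Ghd$ is not available in this generality, and indeed the paper obtains (ii)$\Rightarrow$(i) only by going around the full cycle. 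Two smaller points: your (ii)$\Rightarrow$(iii) should quote the Tor-vanishing via GF-closedness of arbitrary rings \cite[Corollary 4.12]{ss} together with \cite[Theorem 2.8]{bennis}, as the paper does, rather than ``Holm's vanishing theorem,'' whose hypotheses include coherence that $AG$ need not satisfy; the bound $\id_{AG}(AG)\leq\sfli(A)+1$ via Lemma \ref{lem2}.(iii), your $\aleph_0$-Noetherianity observation for $AG$, and your (iii)$\Rightarrow$(iv)$\Rightarrow$(v) all match the paper and are fine.
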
 

\begin{proof} The ``(i) $\Rightarrow$ (ii)" direction is true for any commutative ring $A$ and without any finite generation condition on $G$ - this can be seen quite directly but for a reference, see \cite[Corollary 5.2]{dds}. 

(ii) $\Rightarrow$ (iii): As $A$ is $\aleph_0$-Noetherian and $G$ is finitely generated, $AG$ is $\aleph_0$-Noetherian. We have $\Ghd_A(G)=0$ by assumption, so we can use Lemma \ref{lem2}.(iii) to deduce that $\id_{AG}(AG)\leq \sfli(A)+1$. By the definition of the Gorenstein homological dimension, we have that $\Gfd_{AG}(A)=0$. Since every ring is GF-closed (see \cite[Corollary 4.12]{ss}), we can invoke \cite[Theorem 2.8]{bennis} to get that $\tor^{AG}_i(I,A)=0$ for any $AG$-injective $I$ and for all $i\geq 1$. Now, $\tor^{AG}_i(I,A) \cong \tor^{AG}_i(A,I)$ $\forall i$, as $AG \cong (AG)^{\op}$. So, $\tor^{AG}_i(A,I)=0$ for any $AG$-injective $I$ and for all $i\geq 1$. This precisely means that $\hh_i(G,\rule{0.3cm}{0.15mm})$ vanishes on injective $AG$-modules $\forall i\geq 1$.

(iii) $\Rightarrow$ (iv) is trivial.

In \cite[Proposition 2.1]{ejoa}, Emmanouil works with $A=\mathbb{Z}$ and has a different statement in place of our (ii). But his proof of (iv) $\Rightarrow$ (v) goes through in our case verbartim. The same is true with the proof of the ``(v) $\Rightarrow$ (i)" direction but since (i) is a group-theoretic statement, it is worth explaining why Emmanouil's proof (i.e. the proof for $A=\mathbb{Z}$) works more generally. Let us assume that (v) holds but $G$ is infinite. First, following Emmanouil's proof, one gets an injective $AG$-module $I$ satisfying $I_G\neq 0$. It is a direct application of a result of Strebel's (in Strebel's original paper, it is clearly stated that $A$ can be taken to be any commutative ring \cite[Section 2 and Section 4]{strebel}) that, as $G$ is finitely generated, the group of $G$-coinvariants of any coinduced $AG$-module is trivial. From this fact, it is an easy deduction (as observed by Ikenaga in \cite[Corollary 1.6]{ik} for $A=\mathbb{Z}$, but one can replace $\mathbb{Z}$ with any commutative ring here) that if $G$ is allowed to be both infinite and finitely generated, then the group of $G$-coinvariants of any injective $AG$-module is trivial. Thus, we have a contradiction, and therefore, $G$ must be finite if (v) is satisfied.   \end{proof}

\begin{corollary}\label{ghd0char} Let $A$ be an $\aleph_0$-Noetherian commutative ring satisfying $\sfli(A)<\infty$, and let $G$ be a group. Then, $\Ghd_A(G)=0 \Leftrightarrow G$ is locally finite.

\end{corollary}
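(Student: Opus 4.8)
The plan is to prove the two implications separately, reducing the forward direction to the finitely generated case already settled in Theorem \ref{ghd0}, and obtaining the reverse direction from the finite-subgroup case by a direct limit argument. For the implication $\Ghd_A(G)=0 \Rightarrow G$ locally finite, I would take an arbitrary finitely generated subgroup $H \leq G$. Since $\sfli(A)<\infty$, Lemma \ref{lemma-ks}.(i) gives $\Ghd_A(H) \leq \Ghd_A(G) = 0$, and as Gorenstein flat dimension is non-negative this forces $\Ghd_A(H)=0$. Now $H$ is finitely generated while $A$ is $\aleph_0$-Noetherian with $\sfli(A)<\infty$, so the implication (ii) $\Rightarrow$ (i) of Theorem \ref{ghd0} applies to $H$ and yields that $H$ is finite. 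Since $H$ was arbitrary, $G$ is locally finite; note this direction uses the full strength of the hypotheses on $A$.

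For the converse, suppose $G$ is locally finite, so that $G$ is the directed union of its finite subgroups $F$. First I would observe that by the ``(i) $\Rightarrow$ (ii)" implication of Theorem \ref{ghd0}, which (as remarked in its proof) requires neither the finiteness assumptions on $A$ nor finite generation, each such $F$ satisfies $\Ghd_A(F)=\Gfd_{AF}(A)=0$; that is, the trivial module $A$ is Gorenstein flat over $AF$. The central step is then to push this up to $AG$. Because $AG$ is free, hence faithfully flat, as an $AF$-module, induction $\Ind_F^G(-) = AG \otimes_{AF}(-)$ should carry a complete flat resolution of $A$ over $AF$ to a complete flat resolution over $AG$: tensoring by the flat module $AG$ keeps the complex acyclic with flat syzygies, and for an injective right $AG$-module $I$ the complex $I \otimes_{AF}(-)$ remains acyclic since restriction along the flat extension $AF \hookrightarrow AG$ sends injective $AG$-modules to injective $AF$-modules. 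Hence $\Ind_F^G A \cong A[G/F]$ is Gorenstein flat over $AG$ for every finite $F \leq G$.

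Finally, I would identify the trivial $AG$-module $A$ with the direct limit of these permutation modules: the directed system of $G$-sets $G/F$ has colimit the one-point $G$-set, because $\bigcup_F F = G$, so applying the colimit-preserving functor $A[-]$ gives $\varinjlim_F A[G/F] \cong A$. Since every ring is GF-closed, the class $\GFlat(AG)$ is closed under direct limits, whence $A$ is Gorenstein flat over $AG$ and therefore $\Ghd_A(G)=\Gfd_{AG}(A)=0$, completing the equivalence.

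The hard part will be the middle step of the reverse direction, namely verifying carefully that induction along $AF \hookrightarrow AG$ preserves Gorenstein flatness; the crux is that restriction of an injective $AG$-module to $AF$ stays injective (thanks to freeness of $AG$ over $AF$), which is precisely what makes the total-acyclicity condition defining Gorenstein flatness transfer through the tensor product. The accompanying identification of the trivial module as $\varinjlim_F A[G/F]$ is routine once the colimit of the $G$-sets $G/F$ is recognized as a point, and the invocation of direct-limit closure of $\GFlat(AG)$ is immediate from GF-closedness.
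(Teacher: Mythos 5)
Your proof is correct, and the forward direction coincides exactly with the paper's: restrict to a finitely generated subgroup $H\leq G$ via Lemma \ref{lemma-ks}.(i) (which needs $\sfli(A)<\infty$) and apply the implication (ii) $\Rightarrow$ (i) of Theorem \ref{ghd0} (which needs the $\aleph_0$-Noetherian hypothesis). Where you genuinely diverge is the converse: the paper simply cites \cite[Remark 3.6]{dds} for the fact that $\Ghd_A(G)=0$ for any commutative $A$ and any locally finite $G$, whereas you reconstruct the underlying argument --- Gorenstein flatness of $A$ over $AF$ for each finite $F\leq G$, transfer along the free (hence faithfully flat on both sides) extension $AF\hookrightarrow AG$ so that each $A[G/F]\cong AG\otimes_{AF}A$ is Gorenstein flat over $AG$, and then the identification $A\cong\varinjlim_F A[G/F]$ (using that the finite subgroups of a locally finite group form a directed system with union $G$) together with closure of $\GFlat(AG)$ under direct limits. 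Your transfer step is sound: induction of a complete flat resolution stays acyclic with flat terms since $AG$ is free over $AF$, and $I\otimes_{AG}(AG\otimes_{AF}-)\cong I|_{AF}\otimes_{AF}-$ stays acyclic because restriction of an injective right $AG$-module is injective over $AF$ (its left adjoint $-\otimes_{AF}AG$ is exact). The one place you should be more careful is the final invocation: ``every ring is GF-closed'' (\cite[Corollary 4.12]{ss}) is literally a statement about closure under \emph{extensions}, and the closure of the Gorenstein flat class under direct limits over an arbitrary ring is a separate (true) theorem --- it holds over GF-closed rings by a result of Yang--Liu, or one can quote the direct-limit closure established in \cite{ss} itself --- so that step deserves an explicit citation rather than being presented as an immediate consequence of GF-closedness. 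Your approach buys self-containedness and makes visible why the converse needs no hypotheses on $A$ (which the paper also notes); the paper's citation buys brevity. You also correctly observe that the ``finite $\Rightarrow$ $\Ghd_A=0$'' input for each $F$ requires no conditions on $A$, matching the paper's remark in the proof of Theorem \ref{ghd0}.
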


\begin{proof} Let $\Ghd_A(G)=0$. Take $H$ to be a finitely generated subgroup of $G$ - we need to show that $H$ is finite. Lemma \ref{lemma-ks}.(i) tells us that $\Ghd_A(H)=0$, and then the ``(ii) $\Rightarrow$ (i)" part of Theorem \ref{ghd0} forces $H$ to be finite.

For the other direction, we don't need the ``$\aleph_0$-Noetherian" condition on $A$. For any commutative ring $A$ and any locally finite $G$, $\Ghd_A(G)=0$ - see \cite[Remark 3.6]{dds}.\end{proof} 

\subsection{Gorenstein flat version of Theorem \ref{m1}}\label{s22}

The main objective of this subsection is proving the following result. If we draw comparisons with Theorem \ref{m1}, we will see that other than the obvious changes with ``Gorenstein flats" replacing ``Gorenstein projectives", the starting hypothesis on $A$ is the finiteness of $\sfli(A)$ as opposed to the finiteness of $\spli(A)$ (it turns out that when $A$ is Noetherian, $\spli(A)=\sfli(A)$ - see Corollary \ref{spli-sfli-noetherian}.(i)), the class of groups can now be changed from $\LH\mathscr{X}_{\Char}$ to $\LH\mathscr{X}_{\WChar}$, and the finiteness conclusion on the group changes to local finiteness.

\begin{theorem}\label{m2} Let $A$ be a commutative ring such that $\sfli(A)<\infty$. Let $G$ be a group such that $\GFlat(A) \bigcap \GFlat^{<\infty}(AG) \subseteq \GFlat(AG)$. Now, assume that $G \in \LH\mathscr{X}_{\WChar}$, then $\Ghd_A(G)=0$, and if, additionally, $A$ is $\aleph_0$-Noetherian, then $G$ is locally finite. 

\end{theorem}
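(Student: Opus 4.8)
The plan is to run the proof of Theorem \ref{m1} again with ``Gorenstein flat'' in place of ``Gorenstein projective'' throughout, feeding in the flat counterparts of the three inputs used there: a flat version of Lemma \ref{m01}, the weak-characteristic-module analogue of \cite[Remark 6.9.(ii)]{dds2}, and Corollary \ref{ghd0char} in the role played by Proposition \ref{finchar}.(ii).

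First I would prove the flat analogue of Lemma \ref{m01}, namely that $\ffdim(AG)\leq\sfli(A)$. Write $\sfli(A)=n$. The one genuinely new point is the preliminary claim that every $A$-module has Gorenstein flat dimension at most $n$: since $\sfli(A)=n$, every injective $A$-module $I$ has $\flatdim_A(I)\leq n$, so $\tor^A_i(I,M)=0$ for all $i>n$ and every $A$-module $M$; as every ring is GF-closed \cite[Corollary 4.12]{ss}, the Tor-vanishing criterion for Gorenstein flatness (the dimension-shifted form of the \cite[Theorem 2.8]{bennis} argument already used in the proof of Theorem \ref{ghd0}) then gives $\Gfd_A(M)\leq n$. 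With this in hand the argument is verbatim that of Lemma \ref{m01}: given an $AG$-module $M$ with $\Gfd_{AG}(M)<\infty$, the $n$-th syzygy $K_n$ of a projective resolution of $M$ lies in $\GFlat^{<\infty}(AG)$, and since $\Gfd_A(M)\leq n$ while each $P_i$ is $A$-flat (hence $A$-Gorenstein flat), $K_n\in\GFlat(A)$; hence $K_n\in\GFlat(A)\cap\GFlat^{<\infty}(AG)\subseteq\GFlat(AG)$ by hypothesis, so $\Gfd_{AG}(M)\leq n$. Thus $\FinGfd(AG)\leq n$, and $\ffdim(AG)=\FinGfd(AG)$ by Proposition \ref{finchar}.(i).

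Next comes the group-theoretic step, which is exactly where $G\in\LH\mathscr{X}_{\WChar}$ is used. The flat analogue of \cite[Remark 6.9.(ii)]{dds2}, for groups in $\LH\mathscr{X}_{\WChar}$ with $\sfli(A)<\infty$, upgrades the finitistic bound just obtained to the finiteness of $\sfli(AG)$ (one expects the precise identity $\sfli(AG)=\ffdim(AG)$, but only $\sfli(AG)<\infty$ is needed). The lower bound in Lemma \ref{lemma-ks}.(ii) then yields $\Ghd_A(G)\leq\sfli(AG)<\infty$, so in particular $A\in\GFlat^{<\infty}(AG)$. Since $A$ is flat over itself we also have $A\in\GFlat(A)$, hence $A\in\GFlat(A)\cap\GFlat^{<\infty}(AG)$; the standing hypothesis forces $A\in\GFlat(AG)$, i.e. $\Ghd_A(G)=\Gfd_{AG}(A)=0$. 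This proves the first conclusion with no Noetherian assumption. For the second, if $A$ is additionally $\aleph_0$-Noetherian then it satisfies both hypotheses of Corollary \ref{ghd0char}, which converts $\Ghd_A(G)=0$ into the statement that $G$ is locally finite.

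The main obstacle is the second step: the flat analogue of \cite[Remark 6.9.(ii)]{dds2}, i.e. the finiteness (and expected identity $\sfli(AG)=\ffdim(AG)$) for $G\in\LH\mathscr{X}_{\WChar}$. This is where the transfinite Kropholler-hierarchy induction over the base class $\mathscr{X}_{\WChar}$, together with the flat-dimension properties of $B(G,A)$ and of the weak characteristic module, is doing all the real work; everything else is bookkeeping around the invariants already assembled in Subsection \ref{s21}. The only other point that needs to be stated with care is the preliminary claim of the first paragraph, $\sup_M\Gfd_A(M)\leq\sfli(A)$, which is routine given GF-closedness but does not appear elsewhere in the excerpt.
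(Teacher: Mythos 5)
Your proposal follows the paper's route step for step: a flat analogue of Lemma \ref{m01} (this is the paper's Lemma \ref{m02}, which resolves by flats rather than projectives --- an immaterial difference), then the flat counterpart of the result you predicted from \cite{dds2} (it is \cite[Remark 6.9.(i)]{dds2}, the flat part, rather than part (ii); the paper simply cites it, exactly as you anticipated) to get $\sfli(AG)=\ffdim(AG)<\infty$, then Lemma \ref{lemma-ks}.(ii) to convert $\sfli(AG)<\infty$ into $\Ghd_A(G)<\infty$, the intersection hypothesis applied to $A$ itself to force $\Ghd_A(G)=0$, and finally Corollary \ref{ghd0char} under the $\aleph_0$-Noetherian assumption. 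All of that matches the paper. The one genuine gap is in your ``preliminary claim'' that $\sfli(A)=n$ forces $\Gfd_A(M)\leq n$ for every $A$-module $M$, which you describe as routine given GF-closedness.

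As stated, that justification is circular. The Tor-vanishing characterization in \cite[Theorem 2.8]{bennis} is an equivalence only for modules \emph{already known} to have finite Gorenstein flat dimension over a GF-closed ring: if $\Gfd_A(M)<\infty$, then $\Gfd_A(M)\leq n$ iff $\tor^A_i(I,M)=0$ for all $i>n$ and all injective $I$. (In the proof of Theorem \ref{ghd0} only the easy direction, $\Gfd=0 \Rightarrow$ Tor-vanishing, was used, so that precedent does not cover your step.) Without the a priori finiteness, concluding that a syzygy with $\tor^A_i(I,-)=0$ for all $i\geq 1$ is Gorenstein flat would require producing the right half of the totally acyclic flat complex, i.e.\ a coresolution $0\rightarrow K_n\rightarrow F^0\rightarrow F^1\rightarrow\cdots$ by flats that stays exact under $I\otimes_A-$, and Tor-vanishing alone is not known to supply this over a general ring; the extension-closure of $\GFlat(A)$ from \cite[Corollary 4.12]{ss} does not address it. The missing input is a theorem, not bookkeeping: the paper closes exactly this point in Lemma \ref{m02} by citing \cite[Theorem 2.4]{cet21}, by which the finiteness of $\sfli(A)$ for the commutative ring $A$ forces the supremum of $\Gfd_A$ over all $A$-modules to equal $\sfli(A)=n$; only then does Bennis's criterion legitimately apply to identify the syzygy as Gorenstein flat. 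With that one citation substituted for your first-paragraph argument, your proof is correct and essentially identical to the paper's.
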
 
Our analog of Lemma \ref{m01} is the following technical lemma where, as expected, the ``$G \in \LH\mathscr{X}_{\WChar}$" assumption is not required.

\begin{lemma}\label{m02}  Let $A$ be a commutative ring such that $\sfli(A)<\infty$. Let $G$ be a group such that $\GFlat(A) \bigcap \GFlat^{<\infty}(AG) \subseteq \GFlat(AG)$. Then, $\ffdim(AG)\leq \sfli(A)$.

\end{lemma}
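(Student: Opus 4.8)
The plan is to transcribe the argument of Lemma \ref{m01} almost verbatim, with ``flat'' in place of ``projective'', $\sfli(A)$ in place of $\spli(A)$, and Gorenstein flats in place of Gorenstein projectives. Set $\sfli(A)=n$. The first step is the flat counterpart of the opening observation in the proof of Lemma \ref{m01}: that the supremum over the Gorenstein flat dimensions of all $A$-modules is at most $n$. In the projective case this rested on $\silp(A)=\spli(A)$ (valid since $A$ is commutative, \cite[Corollary 24]{de}) together with \cite[Theorem 2.2.$(\gamma)$]{br}; in the flat case I would instead invoke the equality of the Gorenstein weak global dimension of $A$ with $\sfli(A)$, the flat analogue of that statement, which is moreover left--right symmetric so that $A=A^{\op}$ causes no complication. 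This gives $\Gfd_A(M)\le n$ for every $A$-module $M$.

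With that in hand, let $M$ be an $AG$-module with $\Gfd_{AG}(M)<\infty$, choose an $AG$-flat resolution $F_\ast\to M$, and let $K_n$ be its $n$-th syzygy, so that
\[
0\to K_n\to F_{n-1}\to\cdots\to F_0\to M\to 0
\]
is exact. Since every ring is GF-closed (\cite[Corollary 4.12]{ss}), the usual syzygy principle for Gorenstein flat dimension applies over $AG$: because $\Gfd_{AG}(M)<\infty$, every sufficiently high syzygy has finite Gorenstein flat dimension, so $K_n\in\GFlat^{<\infty}(AG)$. On the other hand, because $AG$ is free, hence flat, as an $A$-module, each $AG$-flat $F_i$ is also $A$-flat; thus $F_\ast$ restricts to an $A$-flat resolution of $M$ with every $F_i\in\GFlat(A)$, and combining this with $\Gfd_A(M)\le n$ from the first step, the same syzygy principle (now using GF-closedness of $A$) forces $K_n\in\GFlat(A)$. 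Therefore $K_n\in\GFlat(A)\bigcap\GFlat^{<\infty}(AG)\subseteq\GFlat(AG)$ by hypothesis, which means $\Gfd_{AG}(M)\le n$. Hence $\FinGfd(AG)\le n$, and since $\ffdim(AG)=\FinGfd(AG)$ by Proposition \ref{finchar}.(i), we conclude $\ffdim(AG)\le n=\sfli(A)$.

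The step I expect to be the main obstacle is the first one: establishing that $\sfli(A)<\infty$ bounds the Gorenstein flat dimension of \emph{every} $A$-module by $\sfli(A)$, i.e.\ the weak/flat analogue of \cite[Theorem 2.2.$(\gamma)$]{br}. Everything after that is a faithful copy of the proof of Lemma \ref{m01}, the two technical enablers being (i) the passage from an $AG$-flat resolution to an $A$-flat resolution, immediate from $A$-freeness of $AG$, and (ii) the GF-closedness of arbitrary rings, which legitimises the syzygy manipulations over both $AG$ and $A$.
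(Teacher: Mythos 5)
Your proposal is correct and takes essentially the same route as the paper's proof: the paper likewise sets $\sfli(A)=n$, bounds the Gorenstein flat dimension of every $A$-module by $n$ via the Gorenstein weak global dimension result \cite[Theorem 2.4]{cet21} (precisely the flat analogue you identified as the key first step), and then runs the identical syzygy argument using \cite[Theorem 2.8]{bennis} and GF-closedness \cite[Corollary 4.12]{ss}, concluding with $\ffdim(AG)=\FinGfd(AG)$ from Proposition \ref{finchar}.(i). The step you flagged as the main obstacle is exactly covered by the paper's citation of \cite{cet21}, so there is no gap.
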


\begin{proof} Let $\sfli(A)=n$ - this forces the supremum over the Gorenstein flat dimension of all $A$-modules to be $n$ again using \cite[Theorem $2.4$]{cet21}. Let $M$ be an $AG$-module such that $\Gfd_{AG}(M)<\infty$, and let $F_{*}$ be an $AG$-flat resolution of $M$ where $K_n$ is the $n$-th kernel. This gives us an exact sequence $$0 \rightarrow K_n \rightarrow F_{n-1} \rightarrow \cdots \rightarrow F_1\rightarrow F_0 \rightarrow M \rightarrow 0,$$ with $K_n \in \GFlat^{<\infty}(AG).$ As, $\Gfd_A(M)\leq n$, and as each $F_i \in \GFlat(A)$ due to being $A$-flat, we have $K_n \in \GFlat(A)$ by \cite[Theorem $2.8$]{bennis} and \cite[Corollary $4.12$]{ss}. This gives us that $K_n \in \GFlat(A) \bigcap \GFlat^{<\infty}(AG) \subseteq \GFlat(AG)$, so $\Gfd_{AG}(M)\leq n$. Thus, $\FinGfd(AG)\leq n$. We are now done since $\FinGfd(AG)=\ffdim(AG)$ by Proposition \ref{finchar}.(i).\end{proof}

\begin{proof}[Proof of Theorem \ref{m2}] We get from Lemma \ref{m02} that $\ffdim(AG)<\infty$. Like in the proof of Theorem \ref{m1}, we note that as $G \in \LH\mathscr{X}_{\WChar}$ and $\sfli(A)<\infty$, \cite[Remark 6.9.(i)]{dds2} yields $\sfli(AG)=\ffdim(AG)<\infty$. Again, as $\sfli(A)<\infty$, we can use Lemma \ref{lemma-ks}.(ii) to see that $\sfli(AG)<\infty \Longleftrightarrow \Ghd_A(G)<\infty$. Thus, $A \in \GFlat^{<\infty}(AG)$, and as we always have $A \in \GFlat(A)$, we have $A \in \GFlat(A) \bigcap \GFlat^{<\infty}(AG) \subseteq \GFlat(AG)$, which means $\Ghd_A(G)=0$. Now, if $A$ is $\aleph_0$-Noetherian, we are done by Corollary \ref{ghd0char}.\end{proof}

\subsection{A useful equality of four invariants}\label{s23}
We will now move towards a Gorenstein injective version of Theorem \ref{m1}, but first, we need to make some progress in the understanding of a few of our invariants which we do in this subsection - one of the main achievements here is Theorem \ref{m3}.
\begin{note}
    Whenever we will take an arbitrary class of groups in this subsection, usually denoted by $\mathscr{X}$, it will be assumed that all finite groups are contained in $\mathscr{X}$.
\end{note}
\begin{definition}(In \cite{ck98}, an analogous notion for ``projective dimension" with $\mathscr{X}=\{$all finite groups$\}$ was introduced.) For any group $G$, any commutative ring $A$, and any class of groups $\mathscr{X}$, we define $\kappa_{\Ij}(AG;\mathscr{X}):=\supr\{\id_{AG}(M): M$ is any $AG$-module satisfying $\id_{AH}(M)<\infty, \forall$ subgroup $H\leq G, H \in \mathscr{X}  \}$
\end{definition}

Parts (i) and (ii) of Lemma \ref{lemb} below are quite standard, and while proving (iii) does require using a deep result from a recent paper by the second author \cite{dds}, the rest of the proof of (iii) is quite simple.

\begin{lemma}\label{lemb}\begin{itemize}

\item[(i)] For any ring $R$, $\fidim(R) \leq \spli(R)$.

\item[(ii)] For any group $G$, any commutative ring $A$, and any subgroup $H \leq G$, $\fidim(AH)\leq \fidim(AG)$.

\item[(iii)] Let $G$ be a group and let $A$ be a commutative ring such that $\spli(A)<\infty$. Then, $\silp(AG)\leq \kappa_{\Ij}(AG;\mathscr{X}_{\Char})$.
\end{itemize}

\end{lemma}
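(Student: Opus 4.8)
The plan is to treat the three parts separately; (i) and (ii) are standard homological-algebra facts, while (iii) carries the real content.

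For (i), the plan is to show that any $M$ with $\id_R(M)=m<\infty$ satisfies $m\le\spli(R)$ by manufacturing an injective module of projective dimension at least $m$. If $m=0$ there is nothing to prove, so assume $m\ge 1$. Finiteness of $\id_R(M)$ yields some module $N$ with $\Ext^m_R(N,M)\neq 0$. I would embed $N$ into its injective envelope, $0\to N\to E\to N'\to 0$, and apply $\Hom_R(-,M)$. In the resulting long exact sequence the term $\Ext^{m+1}_R(N',M)$ vanishes because $\id_R(M)=m$, so $\Ext^m_R(E,M)\twoheadrightarrow\Ext^m_R(N,M)\neq 0$ and hence $\Ext^m_R(E,M)\neq 0$. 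Since $E$ is injective, this forces $\pd_R(E)\ge m$, so $m\le\spli(R)$; taking the supremum over all $M$ of finite injective dimension gives $\fidim(R)\le\spli(R)$.

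For (ii), the plan is to use coinduction $\Coind_H^G(-)=\Hom_{AH}(AG,-)$ together with restriction $\Res_H^G$, and to invoke three formal facts. First, $\Coind_H^G$ is exact (since $AG$ is free as a left $AH$-module) and preserves injectives (being right adjoint to the exact functor $\Res_H^G$), so $\id_{AG}(\Coind_H^G M)\le\id_{AH}(M)$. Second, $\Res_H^G$ is exact and preserves injectives (being right adjoint to the exact induction functor $\Ind_H^G=AG\otimes_{AH}-$), so $\id_{AH}(\Res_H^G L)\le\id_{AG}(L)$. Third, $M$ is an $AH$-direct summand of $\Res_H^G\Coind_H^G M$: the evaluation-at-$1$ counit $\Res_H^G\Coind_H^G M\to M$ admits an explicit $AH$-linear section sending $m$ to the function on $AG$ supported on the trivial coset $H$, so $\id_{AH}(M)\le\id_{AH}(\Res_H^G\Coind_H^G M)$. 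Chaining these, for $M$ with $\id_{AH}(M)=m<\infty$ one gets $m\le\id_{AH}(\Res_H^G\Coind_H^G M)\le\id_{AG}(\Coind_H^G M)\le m$; in particular $\Coind_H^G M$ is an $AG$-module of finite injective dimension $m$, so $\fidim(AG)\ge m$, and taking the supremum over all such $M$ yields $\fidim(AH)\le\fidim(AG)$.

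For (iii), recall $\silp(AG)=\supr\{\id_{AG}(P):P\text{ a projective }AG\text{-module}\}$, so it suffices to verify that every projective $AG$-module $P$ is one of the modules over which the supremum defining $\kappa_{\Ij}(AG;\mathscr{X}_{\Char})$ is taken; that is, $\id_{AH}(P)<\infty$ for every subgroup $H\le G$ with $H\in\mathscr{X}_{\Char}$. Granting this, $\id_{AG}(P)\le\kappa_{\Ij}(AG;\mathscr{X}_{\Char})$ for each such $P$, and the supremum over $P$ finishes the proof. The routine reduction is that $\Res_H^G(AG)\cong(AH)^{(H\backslash G)}$ is $AH$-free, so $\Res_H^G(P)$ is a projective $AH$-module and $\id_{AH}(P)=\id_{AH}(\Res_H^G P)\le\silp(AH)$. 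The crux is therefore to know that $\silp(AH)<\infty$ whenever $H\in\mathscr{X}_{\Char}$ and $\spli(A)<\infty$, and this is exactly where I would invoke the deep result of \cite{dds}: it supplies $\Gcd_A(H)<\infty$ for groups admitting a characteristic module, and then \cite[Theorem 2.14]{dds} upgrades $\Gcd_A(H)<\infty$ to $\silp(AH)=\spli(AH)<\infty$. I expect this finiteness of $\silp(AH)$ for $\mathscr{X}_{\Char}$-groups to be the main obstacle, everything else being formal.
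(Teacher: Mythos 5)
Your proposal is correct and follows essentially the same route as the paper's proof: part (i) via a truncated injective resolution of the test module and the long exact $\Ext$ sequence, part (ii) via the inequalities $\id_{AG}(\Coind^G_H M)\leq \id_{AH}(M)$ and $\id_{AH}(\Res^G_H L)\leq \id_{AG}(L)$ together with $M$ being an $AH$-direct summand of $\Res^G_H\Coind^G_H M$, and part (iii) by restricting a projective $AG$-module to each $H\in\mathscr{X}_{\Char}$ and bounding its injective dimension by $\silp(AH)$. The only cosmetic difference is in (iii), where the paper cites \cite[Theorem 5.3]{dds} directly for $\silp(AH)=\spli(AH)<\infty$, while you derive the same finiteness via $\Gcd_A(H)<\infty$ and \cite[Theorem 2.14]{dds} --- precisely the chain of citations the paper itself employs in the proof of Theorem \ref{m1}, so this is a legitimate and equivalent justification.
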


\begin{proof} 
(i) Let $N$ be an $R$-module such that $\id_R(N)<\infty$, and let $\id_R(N)=n$. So, $\Ext^i_R(M,N)=0$ for any $R$-module $M$ and any $i\geq n+1$, and there must exist an $R$-module $L$ such that $\Ext^n_R(L,N)\neq 0$. Take a truncated injective resolution of $L$, $0 \rightarrow L \rightarrow I \rightarrow K \rightarrow 0$, where $I$ is an injective $R$-module. Applying $\Ext^{*}_R(\rule{0.3cm}{0.15mm},N)$ to this short exact sequence, we get the long exact sequence $$\cdots \rightarrow \Ext^n_R(K,N) \rightarrow \Ext^n_R(I,N) \rightarrow \Ext^n_R(L,N) \rightarrow \Ext^{n+1}_R(K,N) \rightarrow \cdots.$$ We know that $\Ext^{n+1}_R(K,N)=0$, so if $\Ext^n_R(I,N)=0$, we will have that $\Ext^n_R(L,N)=0$ which is not possible. So, $\Ext^n_R(I,N)\neq 0$. So, $\pd_R(I)\geq n$. Thus, $\spli(R)\geq n$. 

(ii) Let $M$ be an $AH$-module with $\id_{AH}(M)<\infty$. Therefore, as $\Coind^G_H(\rule{0.3cm}{0.15mm})$ takes an $AH$-injective resolution of $M$ to an $AG$-injective resolution of $\Coind^G_HM$ of the same length, $\id_{AG}(\Coind^G_HM)\leq \id_{AH}(M)$. Again, as $M$, as an $AH$-module, is a summand of $\Res^G_H\Coind^G_HM$, we have $$\id_{AH}(M)\leq \id_{AH}(\Res^G_H\Coind^G_HM)\leq \id_{AG}(\Coind^G_HM)\leq\id_{AH}(M)$$ So, $\id_{AH}(M)=\id_{AG}(\Coind^G_HM)\leq \fidim(AG)$, and we are done.

(iii) Let $H \leq G$ such that $H \in \mathscr{X}_{\Char}$. As $\spli(A)<\infty$, \cite[Theorem 5.3]{dds} tells us that $\spli(AH)=\silp(AH)<\infty$. Let $P$ be a projective $AG$-module. Considering $P$ as a projective $AH$-module under the usual restriction functor, $\id_{AH}(P)\leq \silp(AH)<\infty$. By the definition of $\kappa_{\Ij}(AG;\mathscr{X}_{\Char})$, $\id_{AG}(P)\leq \kappa_{\Ij}(AG;\mathscr{X}_{\Char})$. Thus, $\silp(AG)\leq \kappa_{\Ij}(AG;\mathscr{X}_{\Char})$. 
\end{proof}
   
The following result can be looked at as a dual version of \cite[Lemma 3.11]{b1}. Unlike Lemma \ref{lema} which deals with injective dimensions, \cite[Lemma 3.11]{b1} dealt with projective dimensions. A key input that has been added to the literature since the paper \cite{b1} was written is a dual (injective) analogue \cite[Lemma 5.7]{et25} of an old result of Benson's \cite[Lemma 5.6]{ben1} that plays a crucial role in the proof of Lemma \ref{lema} (the Benson result played the same role in the proof of \cite[Lemma 3.11]{b1}); we also use a transfinite induction trick that is quite standard in these contexts.

\begin{lemma}\label{lema} Let $A$ be a commutative ring and let $\mathscr{X}$ be a class of groups. Then, for any group $G \in \LH\mathscr{X}$, $\kappa_{\Ij}(AG;\mathscr{X})\leq \fidim(AG)$.

\end{lemma}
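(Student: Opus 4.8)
The plan is to combine a transfinite induction along Kropholler's hierarchy with the injective dual of Benson's lemma and a self-improvement step extracted from the definition of $\fidim$. First I would dispose of the trivial case: if $\fidim(AG)=\infty$ the inequality is vacuous, so I may assume $\fidim(AG)=n<\infty$, and then Lemma \ref{lemb}.(ii) gives $\fidim(AH)\le n$ for every subgroup $H\le G$. Write $\mathscr{C}(\Gamma)$ for the class of $A\Gamma$-modules $N$ with $\id_{AK}(N)<\infty$ for every $K\le\Gamma$ with $K\in\mathscr{X}$, so that $\kappa_{\Ij}(AG;\mathscr{X})=\sup\{\id_{AG}(M):M\in\mathscr{C}(G)\}$. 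The observation that drives the whole argument is the following self-improvement principle: to prove that some $M\in\mathscr{C}(G)$ has $\id_{AG}(M)\le n$ it is enough to prove merely that $\id_{AG}(M)<\infty$, because the definition of $\fidim(AG)$ then forces $\id_{AG}(M)\le\fidim(AG)=n$. Thus the task reduces to propagating \emph{finiteness} of injective dimension up the hierarchy, with the sharp bound coming for free at each stage.

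The engine is \cite[Lemma 5.7]{et25}, the injective analogue of \cite[Lemma 5.6]{ben1}. If a group $\Gamma$ acts on a finite-dimensional contractible CW-complex $X$ with cell-stabilizers $\{\Gamma_\sigma\}$, it produces, for any $A\Gamma$-module $M$, a finite exact coresolution
\[ 0\to M\to D^{0}\to D^{1}\to\cdots\to D^{\dim X}\to 0, \]
in which each $D^{i}$ is a product of modules of the form $\Coind_{\Gamma_\sigma}^{\Gamma}\Res_{\Gamma_\sigma}^{\Gamma}M$. Since $\Coind_{\Gamma_\sigma}^{\Gamma}$ is an exact right adjoint of the exact functor $\Res_{\Gamma_\sigma}^{\Gamma}$, it preserves injectives, so $\id_{A\Gamma}\big(\Coind_{\Gamma_\sigma}^{\Gamma}\Res_{\Gamma_\sigma}^{\Gamma}M\big)\le\id_{A\Gamma_\sigma}(\Res_{\Gamma_\sigma}^{\Gamma}M)$; and because the injective dimension of a product equals the supremum of those of the factors (as $\Ext$ commutes with products in the second variable), each $D^{i}$ satisfies $\id_{A\Gamma}(D^{i})\le\sup_\sigma\id_{A\Gamma_\sigma}(\Res_{\Gamma_\sigma}^{\Gamma}M)$. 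A routine dimension shift along the coresolution then gives
\[ \id_{A\Gamma}(M)\ \le\ \dim X+\sup_\sigma\id_{A\Gamma_\sigma}(\Res_{\Gamma_\sigma}^{\Gamma}M). \]

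Using this I would establish, by transfinite induction on $\alpha$, the statement $P(\alpha)$: for every group $\Gamma\in\hh_\alpha\mathscr{X}$ and every $M\in\mathscr{C}(\Gamma)$ one has $\id_{A\Gamma}(M)\le\fidim(A\Gamma)$. For $\alpha=0$ we have $\Gamma\in\mathscr{X}$, so taking $K=\Gamma$ in the definition of $\mathscr{C}(\Gamma)$ shows $\id_{A\Gamma}(M)<\infty$, and the self-improvement principle closes the case. For a successor $\alpha$, the stabilizers $\Gamma_\sigma$ are subgroups of $\Gamma$ lying in $\hh_{\alpha-1}\mathscr{X}$; any $\mathscr{X}$-subgroup of $\Gamma_\sigma$ is also an $\mathscr{X}$-subgroup of $\Gamma$, so $\Res_{\Gamma_\sigma}^{\Gamma}M\in\mathscr{C}(\Gamma_\sigma)$ and the inductive hypothesis yields $\id_{A\Gamma_\sigma}(\Res_{\Gamma_\sigma}^{\Gamma}M)\le\fidim(A\Gamma_\sigma)\le\fidim(A\Gamma)$. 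If $\fidim(A\Gamma)=\infty$ the conclusion of $P(\alpha)$ is vacuous; otherwise this supremum is finite, so the displayed inequality gives $\id_{A\Gamma}(M)<\infty$, and a second application of the self-improvement principle upgrades it to the sharp bound $\id_{A\Gamma}(M)\le\fidim(A\Gamma)$. The limit case is immediate from $\hh_\alpha\mathscr{X}=\bigcup_{\beta<\alpha}\hh_\beta\mathscr{X}$. This proves the lemma for every $G\in\hh\mathscr{X}$.

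It remains to pass from $\hh\mathscr{X}$ to $\LH\mathscr{X}$, and this is where I expect the genuine obstacle to lie. For $G\in\LH\mathscr{X}$ and $M\in\mathscr{C}(G)$, every finitely generated $F\le G$ lies in $\hh\mathscr{X}$ and $\Res_F^{G}M\in\mathscr{C}(F)$, so the case already proved furnishes the uniform bound $\id_{AF}(\Res_F^{G}M)\le\fidim(AF)\le\fidim(AG)=n$ over all such $F$. The difficulty is to promote this uniform \emph{local} bound to $\id_{AG}(M)\le n$: restriction is only known to be non-increasing on injective dimension, so the reverse passage through the directed union $AG=\varinjlim_F AF$ is delicate. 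Concretely, restricting an $AG$-injective coresolution of $M$ to each $F$ keeps it injective, so the $n$-th cosyzygy $C$ has the property that $\Res_F^{G}C$ is $AF$-injective for every finitely generated $F$, and one must deduce that $C$ is itself $AG$-injective. I would approach this through Baer's criterion organised over the directed family of finitely generated subgroups, but the real content is defeating the $\varprojlim^1$-type obstruction to gluing the locally defined extensions into a single global one; this is the step demanding the most care, and the point at which any additional structural input would have to be used.
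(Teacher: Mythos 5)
Your treatment of the $\hh\mathscr{X}$ part is essentially the paper's proof of its Claim A: the same transfinite induction, the same finite coresolution of $M$ by products of coinduced modules coming from a finite-dimensional contractible CW-complex, and the same self-improvement step $\id_{A\Gamma}(M)<\infty \Rightarrow \id_{A\Gamma}(M)\leq \fidim(A\Gamma)\leq \fidim(AG)$ via the subgroup monotonicity of $\fidim$ (Lemma \ref{lemb}.(ii)). One bookkeeping correction: the coresolution $0\to M\to \Hom_A(C_0,M)\to\cdots\to\Hom_A(C_r,M)\to 0$ is not the content of \cite[Lemma 5.7]{et25}; it is obtained simply by applying $\Hom_A(\rule{0.3cm}{0.15mm},M)$ to the $A$-split cellular chain complex $0\to C_r\to\cdots\to C_0\to A\to 0$ of permutation modules, together with $\Hom_A(A[H/H'],M)\cong \Coind^H_{H'}M$. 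The actual \cite[Lemma 5.7]{et25} is a different statement, and it is exactly the tool you are missing below.

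The genuine gap is the passage from $\hh\mathscr{X}$ to $\LH\mathscr{X}$, which you explicitly leave open, and the route you sketch would fail: it is not true in general that a module whose restriction to every finitely generated subgroup is injective must be $AG$-injective, so no Baer-criterion gluing over the directed (not well-ordered) family of finitely generated subgroups can show the $n$-th cosyzygy $C$ is injective; the obstruction you name is real. The paper's resolution never claims $C$ is injective --- by your own self-improvement principle, finiteness of its injective dimension suffices --- and has two ingredients. First, a countable group in $\LH\mathscr{X}$ already lies in $\hh\mathscr{X}$, because a countable group acts on a tree with finitely generated vertex and edge stabilizers \cite[Lemma 2.5]{jkl}; so the $\hh\mathscr{X}$ case covers all countable subgroups. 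Second, for uncountable $H\leq G$ one inducts on the cardinality of $H$: write $H=\bigcup_{\lambda<\delta}H_{\lambda}$ as a continuous well-ordered ascending union of subgroups of strictly smaller cardinality, so that $\id_{AH_{\lambda}}(M)\leq n$ by the cardinality induction, whence the $n$-th cosyzygy $K_n$ of an injective resolution satisfies $\id_{AH_{\lambda}}(K_n)=0$ for all $\lambda<\delta$; then \cite[Lemma 5.7]{et25} --- the injective analogue of Benson's \cite[Lemma 5.6]{ben1}, valid precisely for such continuous chains --- gives $\id_{AH}(K_n)\leq 2$, hence $\id_{AH}(M)<\infty$, hence $\id_{AH}(M)\leq \fidim(AH)\leq \fidim(AG)=n$. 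So the missing idea is: reduce countable groups into $\hh\mathscr{X}$ via the tree action, and for uncountable groups trade ``injective'' for ``injective dimension at most $2$'' along continuous chains via the Benson-type lemma, letting $\fidim$ absorb the loss.
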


\begin{proof} Let $\fidim(AG)=n<\infty$, and let $M$ be an $AG$-module such that $\id_{AH}(M)<\infty$ for all $H \leq G$, $H\in \mathscr{X}$. 

\textbf{Claim A:} For any $H\leq G$ with $H \in \hh\mathscr{X}$, $\id_{AH}(M)\leq n$.

\textbf{Proof of Claim A:} We will proceed by transfinite induction on the smallest ordinal $\alpha$ such that $H\in H_{\alpha}\mathscr{X}$. First, we check the base case - if $\alpha=0$, then $H \in \mathscr{X}$, and $\id_{AH}(M)<\infty$, and as $\fidim(AH)\leq \fidim(AG)$ by Lemma \ref{lemb}.(ii), we have $\id_{AH}(M)\leq n$. Now, assume that Claim A is true for all $H\in H_{<\alpha}\mathscr{X}$. For the induction step, assume that $H\in H_{\alpha}\mathscr{X}$. From the definition of $H_{\alpha}\mathscr{X}$, it follows that $H$ acts on a finite-dimensional contractible CW-complex with stabilizers in $H_{<\alpha}\mathscr{X}$, and therefore there is a finite-length exact sequence of $AH$-modules, \begin{equation}\label{eq31} 0 \rightarrow C_r \rightarrow \cdots \rightarrow C_1 \rightarrow C_0 \rightarrow A \rightarrow 0,\end{equation} where each $C_i$ is a (not necessarily finite) direct sum of permutation modules of the form $A[H/H']$ with $H'\in H_{<\alpha}\mathscr{X}$, so for each $i$, let us write $C_i = \bigoplus_{j_i} A[H/H_{j_i}]$. As the exact sequence (\ref{eq31}) is $A$-split, we can apply $\Hom_A(\rule{0.3cm}{0.15mm},M)$ to it to obtain an exact sequence of $AH$-modules, 
\begin{equation}\label{eq32} 0 \rightarrow M \rightarrow \Hom_A(C_0,M)\rightarrow \Hom_A(C_1,M)\rightarrow \cdots \rightarrow \Hom_A(C_r,M)\rightarrow 0.\end{equation} 
Now, $\Hom_A(C_i,M)\cong \prod_{j_i}\Hom_A(A[H/H_{j_i}],M)\cong \prod_{j_i}\Coind^H_{H_{j_i}}M$. As noted in the proof of Lemam \ref{lemb}.(ii), $\id_{AH}(\Coind^H_{H_{j_i}}M)\leq \id_{AH_{j_i}}(M) \leq n$ (the last inequality follows from our induction hypothesis as $H_{j_i}\in H_{<\alpha}\mathscr{X}$). So, except $M$, all of the terms in the exact sequence (\ref{eq32}) have finite injective dimension as $AH$-modules. Thus, $\id_{AH}(M)<\infty$, i.e. $\id_{AH}(M)\leq \fidim(AH)\leq \fidim(AG)= n$. This ends the proof of Claim A.

\textbf{Claim B:} If $H\in \LH\mathscr{X}$, then $\id_{AH}(M)\leq n$. 

\textbf{Proof of Claim B:} If $H$ is countable, then $H\in H\mathscr{X}$ because $H$ acts on a tree with finitely generated vertex and edge stabilizers \cite[Lemma 2.5]{jkl}, and we are done by Claim A. If $H$ is uncountable, we proceed with the following induction hypothesis - assume that Claim B is true for all $H'< H$ of cardinality strictly smaller than that of $H$. As $H$ is uncountable, we can write $H = \bigcup_{\lambda<\delta}H_{\lambda}$, for some ordinal $\delta$, with $\id_{AH_{\lambda}}(M)\leq n$ for all $\lambda<\delta$. So, if we take $K_n$ to be the $n$-th cosyzygy in an $AG$-injective resolution of $M$, then $\id_{AH_{\lambda}}(K_n)=0$ for all $\lambda<\delta$. Now, \cite[Lemma 5.7]{et25} tells us that $\id_{AH}(K_n)\leq 2$ and consequently $\id_{AH}(M)<\infty$, i.e. $\id_{AH}(M)\leq \fidim(AH)\leq \fidim(AG)=n$ (although \cite[Lemma 5.7]{et25} is written for $A=\mathbb{Z}$, their proof works for any commutative $A$, which is not surprising because \cite[Lemma 5.7]{et25}, the injective analogue of \cite[Lemma 5.6]{ben1}, was also for all commutative base rings). This ends the proof of Claim B.

If we take $H=G$ in the statement of Claim B, we are done with the proof of the whole lemma. \end{proof}

\begin{theorem}\label{m3} Let $A$ be a commutative $\aleph_0$-Noetherian ring such that $\spli(A)<\infty$, and let $G$ be a group in $\LH\mathscr{X}_{\Char}$. Then, $$\fidim(AG)=\silp(AG)=\spli(AG)=\kappa_{\Ij}(AG;\mathscr{X}_{\Char}).$$
\end{theorem}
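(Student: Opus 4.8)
The plan is to prove the equality by bolting the three inequalities from the preceding lemmas into a single cycle, so that the four invariants get squeezed together. Concretely, I would first record the chain
\begin{equation*}
\silp(AG)\;\le\;\kappa_{\Ij}(AG;\mathscr{X}_{\Char})\;\le\;\fidim(AG)\;\le\;\spli(AG),
\end{equation*}
where the first inequality is Lemma \ref{lemb}.(iii) (this is exactly where the hypothesis $\spli(A)<\infty$ is consumed), the second is Lemma \ref{lema} applied with $\mathscr{X}=\mathscr{X}_{\Char}$ (this is where $G\in\LH\mathscr{X}_{\Char}$ is used, and it is legitimate since $\mathscr{X}_{\Char}$ contains all finite groups, as demanded by the standing Note of the subsection), and the third is the general inequality $\fidim(R)\le\spli(R)$ of Lemma \ref{lemb}.(i) specialized to $R=AG$. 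All three are already available to me, so no new homological algebra is needed for this step.

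To finish I would close the chain into a loop by supplying $\spli(AG)\le\silp(AG)$, i.e. the equality $\silp(AG)=\spli(AG)$. Granting that, the chain becomes $\silp(AG)\le\kappa_{\Ij}(AG;\mathscr{X}_{\Char})\le\fidim(AG)\le\spli(AG)=\silp(AG)$, which forces every term to coincide (the argument is formally valid even in the degenerate case where all four are $+\infty$, which is important since finiteness is never assumed). The required equality $\silp(AG)=\spli(AG)$ is precisely Lemma \ref{lem2}.(i), and this is the one place where the $\aleph_0$-Noetherian hypothesis earns its keep.

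The main obstacle, and the only point that genuinely needs care, is a mismatch of hypotheses: Lemma \ref{lem2}.(i) is stated for a ring $AG$ that is itself $\aleph_0$-Noetherian, whereas here I am only handed that the \emph{base} ring $A$ is $\aleph_0$-Noetherian, and $\LH\mathscr{X}_{\Char}$ contains uncountable groups for which $AG$ need not inherit countable generation of ideals. I therefore expect the real work to be a reduction of the computation of $\silp(AG)$ and $\spli(AG)$ to countable (indeed finitely generated) subgroups $H\le G$: for finitely generated $H$ the ring $AH$ \emph{is} $\aleph_0$-Noetherian (as already used in the proof of Theorem \ref{ghd0}), so Lemma \ref{lem2}.(i) applies to each $AH$, and one would then propagate the equality back to $G$ using subgroup-monotonicity of these invariants (in the spirit of Lemma \ref{lemb}.(ii)) together with the fact that $\LH\mathscr{X}_{\Char}$ is subgroup-closed. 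Justifying that the $\silp$/$\spli$ of $AG$ are controlled by finitely generated subgroups is the delicate ingredient; once it is in place, the theorem is a purely formal consequence of Lemmas \ref{lemb}.(i), \ref{lemb}.(iii), \ref{lema}, and \ref{lem2}.(i), with essentially all of the substantive mathematics (the transfinite induction of Lemma \ref{lema} resting on \cite[Lemma 5.7]{et25}, and the deep input \cite[Theorem 5.3]{dds} behind Lemma \ref{lemb}.(iii)) having already been invested in the lemmas.
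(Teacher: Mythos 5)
Your cycle of inequalities is exactly the paper's: $\silp(AG)\leq \kappa_{\Ij}(AG;\mathscr{X}_{\Char})$ by Lemma \ref{lemb}.(iii), $\kappa_{\Ij}(AG;\mathscr{X}_{\Char})\leq \fidim(AG)$ by Lemma \ref{lema}, $\fidim(AG)\leq \spli(AG)$ by Lemma \ref{lemb}.(i), and the loop closes once $\silp(AG)=\spli(AG)$ is known. You also correctly diagnosed the hypothesis mismatch in invoking Lemma \ref{lem2}.(i): that lemma requires the group ring $AG$ itself to be $\aleph_0$-Noetherian, which fails for general (e.g.\ uncountable, or even just infinitely generated) $G\in\LH\mathscr{X}_{\Char}$. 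But this is where your proposal has a genuine gap: the paper does not perform any reduction to finitely generated subgroups at this point. It instead cites a stronger result from the literature --- \cite[Proposition 5.6]{ks} or \cite[Remark 25.(ii)]{de} --- which asserts $\silp(AG)=\spli(AG)$ for \emph{every} group $G$ as soon as the \emph{base} ring $A$ is $\aleph_0$-Noetherian. With that citation, the proof of Theorem \ref{m3} is the four-line squeeze you wrote down, and no further work is needed.

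The reduction you sketch in its place does not go through as described. Subgroup-monotonicity (in the spirit of Lemma \ref{lemb}.(ii)) only yields inequalities of the form $\spli(AH)\leq \spli(AG)$ and $\silp(AH)\leq \silp(AG)$ for $H\leq G$; so knowing $\silp(AH)=\spli(AH)$ for all finitely generated $H$ gives $\sup_H \spli(AH)\leq \min\{\spli(AG),\silp(AG)\}$, which is the wrong direction --- what you need to close the loop is the \emph{upper} bound $\spli(AG)\leq \silp(AG)$, i.e.\ control of $\spli(AG)$ \emph{from above} by finitely generated subgroups. Such local-to-global control is never a formality: it is exactly the kind of statement that requires a Benson-type continuity lemma for projective dimension over ascending unions (\cite[Lemma 5.6]{ben1}) together with a transfinite induction up Kropholler's hierarchy, i.e.\ a projective-dimension analogue of Lemma \ref{lema} (this is the content of \cite[Lemma 3.11]{b1} and \cite[Theorem 3.1]{b1}, proved there under different hypotheses on $A$). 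None of this machinery is supplied in your sketch, and you yourself flag the step as ``delicate'' without filling it. So as written the proposal is incomplete at precisely the one link of the chain that is not already contained in Lemmas \ref{lemb} and \ref{lema}; the fix is simply to replace Lemma \ref{lem2}.(i) by the base-ring version of the equality $\silp=\spli$ from \cite[Proposition 5.6]{ks} or \cite[Remark 25.(ii)]{de}.
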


\begin{proof} As $A$ is $\aleph_0$-Noetherian, we have $\silp(AG)=\spli(AG)$ (see \cite[Proposition 5.6]{ks} or \cite[Remark 25.(ii)]{de}) - combining this fact with Lemma \ref{lemb}.(i), Lemma \ref{lemb}.(iii), and Lemma \ref{lema}, we have $\fidim(AG)\leq \spli(AG)=\silp(AG)\leq \kappa_{\Ij}(AG;\mathscr{X}_{\Char})\leq \fidim(AG)$. Thus, we are done.\end{proof}

\begin{remark} The ``projective" analogue of Theorem \ref{m3} was proved in \cite[Theorem 3.1]{b1} under a stronger hypothesis on $G$ with $A$ taken to be any commutative ring of finite global dimension, but Theorem \ref{m3} cannot be directly derived from that result. \cite[Theorem 3.1]{b1} was itself a generalization of an old theorem due to Cornick and Kropholler \cite[Theorem C]{ck98}.

\end{remark}

\subsection{Gorenstein injective version of Theorem \ref{m1}}\label{s24}

 The main result of this subsection is the following. One can draw obvious analogies with the starting hypotheses of Theorems \ref{m1} and \ref{m2}. The purpose of the ``$\aleph_0$-Noetherian" hypothesis on $A$ is to allow us to use Theorem \ref{m3}.

\begin{theorem}\label{m4} Let $A$ be a commutative $\aleph_0$-Noetherian ring such that $\spli(A)<\infty$, and let $G$ be a group such that $\GInj(A) \bigcap \GInj^{<\infty}(AG) \subseteq \GInj(AG)$. Assume that $G \in \LH\mathscr{X}_{\Char}$, then $\spli(AG)=\spli(A)$. In particular, if $A=\mathbb{Z}$, then $G$ has to be finite.

\end{theorem}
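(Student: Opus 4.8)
The plan is to follow the template of Theorems \ref{m1} and \ref{m2}, proving the equality $\spli(AG)=\spli(A)$ as two opposite inequalities and then reading off the finiteness of $G$ by specialising to $A=\mathbb{Z}$. For the forward inequality $\spli(AG)\leq\spli(A)$ I would first establish an injective analogue of Lemma \ref{m01}, namely that the hypothesis $\GInj(A)\bigcap\GInj^{<\infty}(AG)\subseteq\GInj(AG)$ forces $\fidim(AG)\leq\spli(A)$; feeding this into Theorem \ref{m3} (which is precisely why the $\aleph_0$-Noetherian hypothesis on $A$ is imposed) upgrades it to a bound on $\spli(AG)$. For the reverse inequality I would run an elementary change-of-rings argument, and the specialisation to $\mathbb{Z}$ then falls out of Proposition \ref{finchar}.(ii).

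The injective analogue of Lemma \ref{m01} is where the genuine work lies, and it is slightly more delicate than its projective counterpart. Write $n=\spli(A)$; since $A$ is commutative we have $\silp(A)=\spli(A)=n$ by \cite[Corollary 24]{de}, and hence the supremum over the Gorenstein injective dimension of all $A$-modules is $n$ (the injective reading of \cite[Theorem 2.2]{br}, using that $\sup\{\Gid_A M\}=\sup\{\Gpd_A M\}$). Given an $AG$-module $M$ with $\Gid_{AG}(M)<\infty$, I would truncate an $AG$-injective coresolution at its $n$-th cosyzygy $K$, so that $K\in\GInj^{<\infty}(AG)$. The point at which the injective case departs from the projective one is that I now need the restricted terms of this coresolution to be $A$-injective in order to recognise $K$ as Gorenstein injective over $A$; this works because $\Res^G_{\{1\}}$ preserves injectives, its left adjoint $\Ind^G_{\{1\}}=AG\otimes_A(-)$ being exact as $AG$ is free over $A$. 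Thus restriction exhibits $K$ as the $n$-th cosyzygy of $M$ over $A$, and since $\Gid_A(M)\leq n$ this forces $K\in\GInj(A)$. Therefore $K\in\GInj(A)\bigcap\GInj^{<\infty}(AG)\subseteq\GInj(AG)$, so $\Gid_{AG}(M)\leq n$; hence $\fgidim(AG)\leq n$, and $\fidim(AG)\leq n$ by Proposition \ref{finchar}.(i).

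With the lemma in hand, Theorem \ref{m3} gives $\spli(AG)=\fidim(AG)\leq\spli(A)$. For the reverse inequality, take any $A$-projective $P$; then $AG\otimes_A P$ is $AG$-projective, so $\id_{AG}(AG\otimes_A P)\leq\silp(AG)$. Restricting to $A$, and again invoking that $\Res^G_{\{1\}}$ preserves injectives together with the $A$-isomorphism $\Res(AG\otimes_A P)\cong\bigoplus_G P$, we obtain $\id_A\big(\bigoplus_G P\big)\leq\id_{AG}(AG\otimes_A P)$; since $P$ is a direct summand of $\bigoplus_G P$ this yields $\id_A(P)\leq\silp(AG)$. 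Taking the supremum over all $A$-projective $P$ gives $\spli(A)=\silp(A)\leq\silp(AG)=\spli(AG)$, where the first equality uses commutativity of $A$ and the last uses the $\aleph_0$-Noetherian hypothesis on $A$ as in Theorem \ref{m3}. Combining the two inequalities gives $\spli(AG)=\spli(A)$.

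Finally, for $A=\mathbb{Z}$ we have $\spli(\mathbb{Z})=1$ (apply Proposition \ref{finchar}.(ii) to the trivial group), so $\spli(\mathbb{Z}G)=1$, and Proposition \ref{finchar}.(ii) forces $G$ to be finite. The main obstacle throughout is the forward inequality $\spli(AG)\leq\spli(A)$, and within it the observation that restriction from $AG$ to $A$ preserves injectives: this is the ingredient with no analogue needed in the projective setting of Theorem \ref{m1}, and it is used both in the injective version of Lemma \ref{m01} and in the change-of-rings argument for the reverse inequality. The remaining steps, namely deploying Theorem \ref{m3} and specialising to $\mathbb{Z}$, are routine.
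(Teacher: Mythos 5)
Your proposal is correct and takes essentially the same route as the paper: your injective analogue of Lemma \ref{m01} is precisely the paper's Lemma \ref{lemd} (cosyzygy truncation plus the hypothesis $\GInj(A)\bigcap\GInj^{<\infty}(AG)\subseteq\GInj(AG)$, with the point that restriction along $A\to AG$ preserves injectives made explicit, which the paper leaves implicit), and the conclusion then follows exactly as in the paper by combining this with Theorem \ref{m3} and Proposition \ref{finchar}.(ii). The only cosmetic difference is that you reprove the reverse inequality $\spli(A)\leq\spli(AG)$ via an induction-and-$\silp$ detour, whereas the paper simply recalls it as a standard fact holding without any hypotheses on $A$ or $G$.
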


As in Subsections \ref{s20} and \ref{s22}, before proving Theorem \ref{m4}, we first record a technical lemma without the ``$G \in \LH\mathscr{X}_{\Char}$" assumption and the $\aleph_0$-Noetherian hypothesis on $A$.

\begin{lemma}(analogous to Lemmas \ref{m01} and \ref{m02})\label{lemd} Let $A$ be a commutative ring such that $\spli(A)<\infty$, and let $G$ be a group such that $\GInj(A) \bigcap \GInj^{<\infty}(AG) \subseteq \GInj(AG)$. Then, $\fidim(AG)\leq \spli(A)$.

\end{lemma}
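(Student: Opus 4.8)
The plan is to mirror the proofs of Lemmas \ref{m01} and \ref{m02}, replacing syzygies by cosyzygies and projective resolutions by injective coresolutions. Write $\spli(A)=n$. As in Lemma \ref{m01}, since $A$ is commutative we have $\silp(A)\leq n$ by \cite[Corollary 24]{de}, and from this I would deduce that the supremum over the Gorenstein injective dimension of all $A$-modules is exactly $n$. This is the injective counterpart of the bound obtained via \cite[Theorem 2.2]{br} in Lemma \ref{m01}; it also follows from the left-right symmetry of the Gorenstein global dimension, which gives $\supr_M\Gid_A(M)=\supr_M\Gpd_A(M)=n$.

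The one genuinely new ingredient, compared with the projective and flat cases, is the behaviour of injectives under restriction from $AG$ to $A$. In Lemmas \ref{m01} and \ref{m02} the crucial point was that an $AG$-projective (resp. $AG$-flat) module is automatically $A$-projective (resp. $A$-flat) because $AG$ is free over $A$. For the injective case I would instead use that $AG$ is flat (indeed free) as an $A$-module, so that extension of scalars $AG\otimes_A(\rule{0.3cm}{0.15mm})$ is exact; since the restriction functor $AG\text{-Mod}\to A\text{-Mod}$ is right adjoint to this exact functor, it preserves injectives and hence sends every $AG$-injective module into $\GInj(A)$.

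With these two facts in hand, let $M$ be an $AG$-module with $\Gid_{AG}(M)<\infty$, and choose an $AG$-injective coresolution $0\to M\to I^0\to I^1\to\cdots$ with $n$-th cosyzygy $K^n$, giving an exact sequence $0\to M\to I^0\to\cdots\to I^{n-1}\to K^n\to 0$. On the one hand $K^n\in\GInj^{<\infty}(AG)$, since passing to a cosyzygy cannot destroy finiteness of the Gorenstein injective dimension. On the other hand, restricting this sequence to $A$ exhibits $K^n$ as the $n$-th cosyzygy of $M$ in a coresolution by $A$-injectives; since $\Gid_A(M)\leq n$, the standard cosyzygy characterization of Gorenstein injective dimension forces $K^n\in\GInj(A)$. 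Hence $K^n\in\GInj(A)\cap\GInj^{<\infty}(AG)\subseteq\GInj(AG)$ by hypothesis, so $\Gid_{AG}(M)\leq n$. This shows $\fgidim(AG)\leq n$, and Proposition \ref{finchar}.(i) gives $\fidim(AG)=\fgidim(AG)\leq n=\spli(A)$.

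The main obstacle I anticipate is the second paragraph: verifying that $AG$-injectives restrict to $A$-injectives (the step that replaces the trivial ``$AG$-projectives are $A$-projective'' observation in Lemma \ref{m01}) and pinning down the correct reference for the equality $\supr_M\Gid_A(M)=\spli(A)$. Once these are settled, the remainder is a routine transcription of the arguments in Lemmas \ref{m01} and \ref{m02}.
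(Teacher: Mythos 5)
Your proof is correct and takes essentially the same route as the paper's: truncate an $AG$-injective coresolution at the $n$-th cosyzygy $K_n$, observe $K_n\in\GInj(A)\cap\GInj^{<\infty}(AG)$ (your ``standard cosyzygy characterization'' is \cite[Theorem 2.22]{hh}, the paper's citation), apply the hypothesis to get $\Gid_{AG}(M)\leq n$, and finish via $\fidim(AG)=\fgidim(AG)$ from Proposition \ref{finchar}.(i). Your explicit adjunction argument that restriction along the flat extension $A\to AG$ preserves injectives is exactly the right justification for a step the paper leaves implicit.
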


\begin{proof} Let $\spli(A)=n<\infty$. As $A$ is commutative, $\silp(A)$ is also finite \cite[Corollary 24]{de}, and like in the proof of Lemma \ref{m01}, \cite[Theorem 2.2.$(\gamma)$]{br} tells us that the supremum of the Gorenstein injective dimension of all $A$-modules is also $n$. Let $M \in \GInj^{<\infty}(AG)$, and take a truncated $AG$-injective resolution of $M$, $$0 \rightarrow M \rightarrow I_0 \rightarrow I_1 \rightarrow \cdots \rightarrow I_{n-1} \rightarrow K_n \rightarrow 0.$$ As each $I_i\in \GInj(AG)$, $K_n \in \GInj^{<\infty}(AG)$. Also, as $\Gid_A(M)\leq n$, $K_n\in \GInj(A)$ \cite[Theorem 2.22]{hh}. So, by our hypothesis, $K_n \in \GInj(AG)$, which forces $\Gid_{AG}(M)\leq n$ (again using \cite[Theorem 2.22]{hh}). Thus, $\fgidim(AG)\leq \spli(A)$, and we are done as $\fidim(AG)=\fgidim(AG)$ by Proposition \ref{finchar}.(i).\end{proof}

\begin{proof}[Proof of Theorem \ref{m4}] Recall that, without any conditions on $A$ and $G$, $\spli(A)\leq \spli(AG)$. Now, combining Theorem \ref{m3} and Lemma \ref{lemd}, we get $\spli(A)\leq \spli(AG) = \fidim(AG) \leq \spli(A)$, and we are done.

If $A=\mathbb{Z}$, then $\spli(\mathbb{Z}G)=\spli(\mathbb{Z})=1$, and so $G$ is finite by Proposition \ref{finchar}.(ii). \end{proof}

\subsection{Do we really need $A=\mathbb{Z}$ for the group-theoretic conclusion in Theorem \ref{m4}?}\label{s25}

The following are two natural questions to ask looking at the ``$\spli(AG)=\spli(A)$" conclusion in the statement of Theorem \ref{m4}.

\begin{question}\label{non-z} Let $A$ be a commutative ring and let $G$ be a group. Can we obtain any group-theoretic conclusions in the following cases?
\begin{itemize}
	\item[(i)] $\spli(AG)=\spli(A)$,
	\item[(ii)]$\sfli(AG)=\sfli(A)$.
\end{itemize}
\end{question}

\begin{remark}
    When $A=\mathbb{Z}$ in Question \ref{non-z}, we know that $\spli(\mathbb{Z}G)=1 \Leftrightarrow G$ is finite (Proposition \ref{finchar}.(ii)) and $\sfli(\mathbb{Z}G)=1\Leftrightarrow G$ is locally finite (Corollary \ref{spli-sfli-noetherian}.(ii)).
\end{remark}

Keeping Question \ref{non-z} in mind, we make the following definitions.

\begin{definition}
    For any commutative ring $A$, we say $A$ satisfies the $\splif$ principle if, for any group $G$, whenever $\spli(AG)=\spli(A)$, $G$ is finite. Similarly, we say $A$ satisfies the $\sflilf$ (resp. $\splilf$) principle if, for any group $G$, whenever $\sfli(AG)=\sfli(A)$ ($\spli(AG)=\spli(A)$, respectively), $G$ is locally finite.
\end{definition}
 The main message of this subsection is the following observation. 
\begin{proposition}\label{principle}
    Let $A$ be a Noetherian commutative ring. Then, $A$ satisfies the $\splif$ principle $\Longrightarrow$ $A$ satisfies the $\sflilf$ principle $\Longrightarrow$ $A$ satisfies the $\splilf$ principle. 
\end{proposition}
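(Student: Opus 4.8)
The plan is to set $s := \spli(A) = \sfli(A)$, an equality that holds because $A$ is Noetherian (Corollary \ref{spli-sfli-noetherian}.(i)), and then treat the two arrows separately. Throughout I would keep in mind the three standard inequalities valid for every group $G$: $\spli(A) \le \spli(AG)$, $\sfli(A) \le \sfli(AG)$ (the latter from the lower bound in Lemma \ref{lemma-ks}.(ii)), and $\sfli(AG) \le \spli(AG)$ (projectives are flat). The second arrow is essentially formal, whereas the first needs a genuine flat-to-projective input.

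For $\sflilf \Rightarrow \splilf$ I would argue directly. Assume $A$ satisfies the $\sflilf$ principle and let $G$ be a group with $\spli(AG) = \spli(A) = s$. Chaining the inequalities above gives $s = \sfli(A) \le \sfli(AG) \le \spli(AG) = s$, hence $\sfli(AG) = \sfli(A)$; the $\sflilf$ principle then forces $G$ to be locally finite, which is exactly the conclusion of the $\splilf$ principle. This direction uses only the cheap comparison $\sfli \le \spli$ together with Corollary \ref{spli-sfli-noetherian}.(i), and needs no further finiteness input.

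For $\splif \Rightarrow \sflilf$, assume $A$ satisfies the $\splif$ principle and let $G$ satisfy $\sfli(AG) = \sfli(A) = s$; since local finiteness is detected on finitely generated subgroups, it suffices to prove that every finitely generated $H \le G$ is finite. First I would record a subgroup-monotonicity statement for $\sfli$: running the coinduction-and-summand argument of Lemma \ref{lemb}.(ii) with flat dimension in place of injective dimension (coinduction preserves injectives, restriction preserves flatness since $AG$ is free as an $AH$-module, and $I$ is a summand of $\Res^G_H\Coind^G_H I$) yields $\sfli(AH) \le \sfli(AG)$. Combined with $\sfli(A) \le \sfli(AH)$ this gives $\sfli(AH) = s$ for every $H \le G$. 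Since $H$ is finitely generated and $A$ is Noetherian, $AH$ is $\aleph_0$-Noetherian, so Lemma \ref{lem2}.(ii) yields $s = \sfli(AH) \le \spli(AH) \le \sfli(AH)+1 = s+1$. If this can be upgraded to $\spli(AH) = \sfli(AH) = s = \spli(A)$, then the $\splif$ principle applied to $H$ forces $H$ to be finite, completing the argument.

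The main obstacle is precisely that upgrade: removing the ``$+1$'' in Lemma \ref{lem2}.(ii) for the rings $AH$. The bound $\spli \le \sfli + 1$ used there is sharp at the level of general $\aleph_0$-Noetherian rings, and under $\splif$ an infinite $H$ with $\sfli(AH) = s$ is a priori consistent with $\spli(AH) = s+1$, so no contradiction can be squeezed out of the inequalities alone. I expect this is exactly where the full Noetherian hypothesis on $A$ (as opposed to mere $\aleph_0$-Noetherianness) must be used — either through a strengthening of Corollary \ref{spli-sfli-noetherian}.(i) giving $\spli(AH) = \sfli(AH)$ for group rings over a Noetherian base, or through a change-of-rings argument that bounds the projective dimension of injective $AH$-modules by their flat dimension. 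Once that equality is established the proof closes as above, so isolating and proving it is the crux of the whole proposition.
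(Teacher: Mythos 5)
Your second implication is the paper's argument verbatim, and the skeleton of your first implication --- reduction to a finitely generated subgroup $H \leq G$, the monotonicity $\sfli(AH)\leq\sfli(AG)$ (which the paper uses silently; your coinduction-and-summand justification is the correct one), the resulting equality $\sfli(AH)=\sfli(A)$, and the sandwich $s\leq\spli(AH)\leq s+1$ from Lemma \ref{lem2}.(ii) --- also matches the paper's. But what you submit is not a complete proof: the upgrade to $\spli(AH)=\sfli(AH)$, which you correctly identify as the crux, is left entirely unproved. As you observe yourself, the inequalities you assemble are consistent with $\spli(AH)=\sfli(AH)+1$, and the $\splif$ principle cannot be applied to $H$ without excluding that possibility; so there is a genuine gap, exactly where you flag it.

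The paper closes this gap with Lemma \ref{nonz-lem}.ii.(b): for $A$ Noetherian and $H$ finitely generated, $\sfli(AH)=\silf(AH)=\silp(AH)=\spli(AH)$. The mechanism is neither of the two repairs you conjecture; it is a detour through the dual invariant $\silf$: first $\spli(AH)=\silp(AH)$ because $AH$ is $\aleph_0$-Noetherian (\cite[Corollary 27]{de}); then $\silp(AH)=\silf(AH)$, which holds over \emph{every} ring by \cite[Proposition 2.1]{et2011}; finally, when $\sfli(AH)<\infty$, the coherent-group-ring argument of \cite[Theorem 3.7]{abhs2011} gives $\silf(AH)=\sfli(AH)$ (and if $\sfli(AH)=\infty$, all four invariants are infinite since $\sfli\leq\spli$). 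Two remarks on your guesses. Your proposed ``strengthening of Corollary \ref{spli-sfli-noetherian}.(i) to group rings over a Noetherian base'' is precisely this lemma, but in the paper the logical order is the reverse of what you suggest: Corollary \ref{spli-sfli-noetherian}.(i) is \emph{deduced} from Lemma \ref{nonz-lem}.ii.(b) by taking the trivial group, so it cannot serve as a starting point to bootstrap from. As for where the Noetherian hypothesis on $A$ enters: the paper uses it to assert that $AH$ is Noetherian, hence coherent; the Noetherian claim is in fact false for general finitely generated $H$ (e.g.\ $H$ free of rank two), and what the cited argument genuinely needs is coherence of $AH$ --- so your instinct that this single flat-to-projective step is the delicate point of the whole proposition is confirmed, arguably even more strongly than the paper's own proof acknowledges.
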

The above proposition could have just been a one-sentence remark. But we believe it will be helpful for the reader to see a full proof because the proof is not immediately obvious, and it involves some useful play on our invariants exploiting some results from \cite{et2011} and \cite{abhs2011}.

\begin{lemma}\label{nonz-lem}
    Let $A$ be a commtutative ring, and let $G$ be a group.

    \begin{itemize}
        \item[(i)] Assume $A$ is $\aleph_0$-Noetherian. Then, $\sfli(AG)\leq \spli(AG)=\silp(AG)=\silf(AG)$.
        \item[(ii)] Assume $A$ is Noetherian and $G$ is finitely generated. Then,
        \begin{itemize}
            \item[(a)] $\sfli(AG)<\infty \Longleftrightarrow \silf(AG)<\infty$. In this case, $\sfli(AG)=\silf(AG)$.
            \item[(b)] $\sfli(AG)=\silf(AG)=\silp(AG)=\spli(AG)$.
        \end{itemize}
    \end{itemize}
\end{lemma}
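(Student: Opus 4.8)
The plan is to prove (i) first, treating its two equalities separately, and then to bootstrap to (ii) by using the extra hypotheses to draw $\sfli(AG)$ into the same cluster. For (i), two of the comparisons are purely formal: since every projective module is flat, the flat dimension of an injective never exceeds its projective dimension, giving $\sfli(AG)\le\spli(AG)$, and the supremum of injective dimensions over projectives is at most the one over the larger class of flats, giving $\silp(AG)\le\silf(AG)$. The equality $\spli(AG)=\silp(AG)$ for $\aleph_0$-Noetherian $A$ is precisely the input already quoted in the proof of Theorem \ref{m3}, so I would cite \cite[Proposition 5.6]{ks} (or \cite[Remark 25.(ii)]{de}) for it. The only substantive point is the reverse inequality $\silf(AG)\le\silp(AG)$: here I would show that a flat $AG$-module $F$ has finite projective dimension and then apply dimension shifting, feeding a finite projective resolution of $F$ through the long exact sequence for $\Ext^{*}_{AG}(N,-)$ degree by degree to conclude $\id_{AG}(F)\le\silp(AG)$. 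The finiteness of $\pd_{AG}(F)$ is exactly where the countable-Noetherian hypothesis enters (over an $\aleph_0$-Noetherian ring every flat module has projective dimension at most $1$); since $AG$ itself need not be $\aleph_0$-Noetherian for arbitrary $G$, this step is really carried by the cited structural results rather than by a naive computation, and I expect it to be the first delicate point.

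For (ii), I would first observe that $A$ Noetherian forces $A$ to be $\aleph_0$-Noetherian, and then, exactly as in the proof of Theorem \ref{ghd0}, the finite generation of $G$ upgrades this to $AG$ being $\aleph_0$-Noetherian. Consequently (i) applies verbatim and already yields $\spli(AG)=\silp(AG)=\silf(AG)$ together with $\sfli(AG)\le\spli(AG)$; moreover, over the now-$\aleph_0$-Noetherian ring $AG$ every flat module genuinely has projective dimension at most $1$. The whole of (ii) therefore reduces to a single missing comparison, namely $\silf(AG)\le\sfli(AG)$ (equivalently $\spli(AG)\le\sfli(AG)$), after which all four invariants coincide and both statements (a) and (b) follow at once, the finiteness equivalence in (a) being automatic from an equality of values in $\{0,1,2,\dots,\infty\}$.

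To produce $\silf(AG)\le\sfli(AG)$ I would invoke Pontryagin (character-module) duality together with the self-opposite identification $AG\cong(AG)^{\op}$. Writing $M^{+}=\Hom_{\mathbb{Z}}(M,\mathbb{Q}/\mathbb{Z})$, one has the formal facts that $M$ is flat iff $M^{+}$ is injective and that $\flatdim_{AG}(M)=\id_{AG}(M^{+})$ always; so for a flat $F$ the dual $F^{+}$ is injective, and one would like to bound $\id_{AG}(F)$ by $\flatdim_{AG}(F^{+})\le\sfli(AG)$. As a sanity check on the value, the bound $\spli(AG)\le\sfli(AG)+1$ from \cite[Theorem 2.9]{et2011} (using $\sfli(AG)<\infty$ and self-opposedness) combined with $\sfli(AG)\le\spli(AG)$ pins everything into the window $\sfli(AG)\le\spli(AG)=\silf(AG)\le\sfli(AG)+1$. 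I would then use the relevant duality and comparison results of \cite{et2011} and \cite{abhs2011} to close this window exactly.

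The main obstacle, and the reason (ii) needs $A$ Noetherian rather than merely $\aleph_0$-Noetherian, is that the inequality $\id_{AG}(F)\le\flatdim_{AG}(F^{+})$ is the genuinely non-formal half of the duality: in general one only has $\flatdim_{AG}(M^{+})\le\id_{AG}(M)$, with the reverse (and hence the coincidence of the injective dimension of a module with the flat dimension of its character dual) requiring a Noetherian-type hypothesis that $AG$ itself may fail to satisfy even when $A$ is Noetherian and $G$ is finitely generated. Equivalently, the hard part is ruling out the extra ``$+1$'' in $\spli(AG)\le\sfli(AG)+1$ that would arise from flats of projective dimension exactly one. I anticipate the cleanest route to be to push the character-dual computation as far as the $\aleph_0$-Noetherian structure allows and then use the cited results to force equality at the top of the window; this off-by-one analysis is where I expect to have to work hardest.
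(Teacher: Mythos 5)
Your overall skeleton coincides with the paper's, but both of the substantive steps are left with genuine gaps. In (i), the formal inequalities and the citation for $\spli(AG)=\silp(AG)$ (\cite[Proposition 5.6]{ks} or \cite[Remark 25.(ii)]{de}) are exactly the paper's, but your mechanism for the remaining equality $\silp(AG)=\silf(AG)$ does not work at the stated generality: the fact that flat modules have projective dimension at most $1$ requires the group ring $AG$, not the coefficient ring $A$, to be $\aleph_0$-Noetherian, and in part (i) the group $G$ is arbitrary. For uncountable $G$ the ring $AG$ is not $\aleph_0$-Noetherian even when $A$ is a field (the augmentation ideal is not countably generated), so flat $AG$-modules need not have finite projective dimension and your dimension shift never starts. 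You half-acknowledge this yourself but then defer to unnamed ``cited structural results'' without identifying one that covers the general case; that deferral is precisely the missing content. The paper's step here is a clean black-box citation: $\silp(R)=\silf(R)$ holds for \emph{every} ring $R$ by \cite[Proposition 2.1]{et2011}, with no hypotheses at all (the paper itself highlights this in a remark in Subsection \ref{s25}); the $\aleph_0$-Noetherian assumption in (i) is consumed entirely by $\spli=\silp$. Your dimension-shifting computation is correct where it applies — a flat module of finite projective dimension does have injective dimension at most $\silp(AG)$ — so it would reprove the case of countable $G$, but it cannot reach the statement as given.

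In (ii), your reduction agrees with the paper's ($A$ Noetherian implies $\aleph_0$-Noetherian, so (i) applies and everything hinges on $\silf(AG)\leq\sfli(AG)$ when $\sfli(AG)<\infty$), and you correctly point at character duality and \cite{abhs2011} as the relevant toolbox; but you never close the argument. Eliminating the ``$+1$'' in the window $\sfli(AG)\leq\silf(AG)\leq\sfli(AG)+1$ \emph{is} the content of the lemma, and your proposal ends by announcing that this is where you expect to work hardest. The paper closes it by asserting that $AG$ is Noetherian, hence coherent (because $A$ is Noetherian and $G$ is finitely generated), and then running the proof of \cite[Theorem 3.7]{abhs2011} verbatim with $\mathbb{Z}$ replaced by $A$, yielding $\silf(AG)=\sfli(AG)$ whenever $\sfli(AG)<\infty$; this proves (a) first, and (b) then follows from (a) together with (i) by splitting into the cases $\silf(AG)$ finite or infinite. (Your plan to deduce (a) from (b) is logically fine, but only after (b) is established, which is exactly the open step.) Incidentally, your instinct that $AG$ ``may fail to satisfy a Noetherian-type hypothesis even when $A$ is Noetherian and $G$ is finitely generated'' is well taken — $\mathbb{Z}[F_2]$ is not Noetherian, so the Noetherianity/coherence of $AG$ asserted in the paper's proof is the step a careful reader should scrutinize — but flagging the difficulty is not resolving it, and as written your (ii) is a plan rather than a proof.
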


\begin{proof}    \begin{itemize}
        \item[(i)] As noted earlier in the proof of Theorem \ref{m3}, $\spli(AG)=\silp(AG)$ here as $A$ is $\aleph_0$-Noetherian. The inequality follows as projectives are flat, and the rightmost equality is due to \cite[Proposition 2.1]{et2011}.

        \item[(ii)] \begin{itemize}
            \item[(a)] By (i), $\silf(AG)<\infty \Longrightarrow \sfli(AG)<\infty$. So, let $\sfli(AG)<\infty$, and now as $AG$ is Noetherian (due to $A$ being Noetherian and $G$ being finitely generated) and hence coherent, the argument from the proof of \cite[Theorem 3.7]{abhs2011} works (\cite[Theorem 3.7]{abhs2011} deals with $A=\mathbb{Z}$ with the hypothesis that $\mathbb{Z}G$ is coherent - the same argument works when $\mathbb{Z}$ is replaced with any commutative ring) to give us $\silf(AG)=\sfli(AG)$.

            \item[(b)] If $\silf(AG)$ is not finite, then by Part ii.(a), $\sfli(AG)$ is also not finite, and we are done by Part (i). And if $\silf(AG)$ is finite, we have $\silf(AG)=\sfli(AG)$ by Part ii.(a), and again, we are done by Part (i). 
        \end{itemize}
        \end{itemize}\end{proof}

We collect two easy consequences of Lemma \ref{nonz-lem}.
\begin{corollary}\label{spli-sfli-noetherian}
    \begin{itemize}
        \item[(i)] For any commutative Noetherian ring $A$, $\spli(A)=\sfli(A)$.
        \item[(ii)] For any group $G$, $G$ is locally finite $\Longleftrightarrow \sfli(\mathbb{Z}G)=1$. 
    \end{itemize}
\end{corollary}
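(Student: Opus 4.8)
The plan is to obtain both parts from Lemma \ref{nonz-lem} by specializing the group, together with the characterizations already recorded in Proposition \ref{finchar}.(ii) and Corollary \ref{ghd0char}. For part (i), I would simply apply Lemma \ref{nonz-lem}.(ii).(b) to the trivial group $\{1\}$, which is finitely generated. Since $A\{1\}=A$ and $A$ is Noetherian, part (ii).(b) gives the chain $\sfli(A)=\silf(A)=\silp(A)=\spli(A)$, and reading off the outer two terms yields $\spli(A)=\sfli(A)$. As a by-product, applying Proposition \ref{finchar}.(ii) to the trivial group shows $\spli(\mathbb{Z})=1$, hence $\sfli(\mathbb{Z})=1$ as well; I will use this value throughout part (ii).

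For the forward implication of part (ii), suppose $G$ is locally finite. Since $\mathbb{Z}$ is $\aleph_0$-Noetherian with $\sfli(\mathbb{Z})=1<\infty$, Corollary \ref{ghd0char} gives $\Ghd_{\mathbb{Z}}(G)=0$. Feeding this into the estimate of Lemma \ref{lemma-ks}.(ii) with $A=\mathbb{Z}$ gives $1=\max\{0,1\}\leq \sfli(\mathbb{Z}G)\leq \Ghd_{\mathbb{Z}}(G)+\sfli(\mathbb{Z})=1$, forcing $\sfli(\mathbb{Z}G)=1$.

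The converse of part (ii) is the delicate direction, and is where I expect the main obstacle to lie. The naive attempt—using only Lemma \ref{lemma-ks}.(ii) to deduce $\Ghd_{\mathbb{Z}}(G)\leq \sfli(\mathbb{Z}G)=1$ and then hoping to invoke Corollary \ref{ghd0char}—does not work, because it cannot exclude $\Ghd_{\mathbb{Z}}(G)=1$; this value genuinely occurs (e.g.\ $G=\mathbb{Z}$, where $\mathbb{Z}G=\mathbb{Z}[t,t^{-1}]$ is regular of Krull dimension $2$ and in fact $\sfli(\mathbb{Z}G)=2\neq1$). So the equality $\sfli(\mathbb{Z}G)=1$ carries strictly more information than the bound on $\Ghd$, and I would instead pass to finitely generated subgroups: for $H\leq G$ finitely generated, $\mathbb{Z}H$ is Noetherian, so Lemma \ref{nonz-lem}.(ii).(b) gives $\spli(\mathbb{Z}H)=\sfli(\mathbb{Z}H)$, and then Proposition \ref{finchar}.(ii) reduces the finiteness of $H$ to proving the single equality $\sfli(\mathbb{Z}H)=1$.

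Everything therefore comes down to the monotonicity statement $\sfli(\mathbb{Z}H)\leq\sfli(\mathbb{Z}G)$ for $H\leq G$, which I regard as the crux. This I would establish exactly as in the coinduction argument of Lemma \ref{lemb}.(ii): for an injective $\mathbb{Z}H$-module $I$, the module $\Coind^G_H I$ is $\mathbb{Z}G$-injective, so $\flatdim_{\mathbb{Z}G}(\Coind^G_H I)\leq\sfli(\mathbb{Z}G)$; restriction along $\mathbb{Z}H\hookrightarrow\mathbb{Z}G$ sends flats to flats (as $\mathbb{Z}G$ is $\mathbb{Z}H$-free), so $\flatdim_{\mathbb{Z}H}(\Res^G_H\Coind^G_H I)\leq\flatdim_{\mathbb{Z}G}(\Coind^G_H I)$; and $I$ is an $\mathbb{Z}H$-summand of $\Res^G_H\Coind^G_H I$. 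Chaining these gives $\flatdim_{\mathbb{Z}H}(I)\leq\sfli(\mathbb{Z}G)$, and taking the supremum over injective $I$ yields the monotonicity. Applying it to the trivial subgroup also records $1=\sfli(\mathbb{Z})\leq\sfli(\mathbb{Z}H)$, so $1\leq\sfli(\mathbb{Z}H)\leq\sfli(\mathbb{Z}G)=1$ forces $\sfli(\mathbb{Z}H)=1$; hence $H$ is finite, and as $H$ ranged over all finitely generated subgroups, $G$ is locally finite.
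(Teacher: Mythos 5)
Your proof is correct and follows essentially the same route as the paper: part (i) is exactly the paper's specialization of Lemma \ref{nonz-lem}.ii.(b) to the trivial group, and for the converse of (ii) the paper likewise squeezes $1=\sfli(\mathbb{Z})\leq \sfli(\mathbb{Z}H)\leq \sfli(\mathbb{Z}G)=1$ over finitely generated subgroups $H\leq G$, converts this to $\spli(\mathbb{Z}H)=1$ via Lemma \ref{nonz-lem}.ii.(b), and concludes finiteness of $H$ from Proposition \ref{finchar}.(ii). The only cosmetic differences are that for the forward direction the paper simply cites \cite[Corollary 6.3]{ks} where you rederive it from Corollary \ref{ghd0char} and Lemma \ref{lemma-ks}.(ii), and that the paper treats the subgroup monotonicity $\sfli(\mathbb{Z}H)\leq\sfli(\mathbb{Z}G)$ as standard (it is \cite[Lemma 2.10]{ks}) where you correctly supply the coinduction-and-summand argument mirroring Lemma \ref{lemb}.(ii).
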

\begin{proof} (i) Take $G$ to be the trivial group in Lemma \ref{nonz-lem}.ii.(b).

    (ii) Let $\sfli(\mathbb{Z}G)=1$, and let $H$ be a finitely generated subgroup of $G$. Then, $1=\sfli(\mathbb{Z})\leq \sfli(\mathbb{Z}H)\leq \sfli(\mathbb{Z}G)=1$. So, $\sfli(\mathbb{Z}H)=1$, and by Lemma \ref{nonz-lem}.ii.(b), $\spli(\mathbb{Z}H)=1$, and therefore $H$ must be finite by Proposition \ref{finchar}.(ii). The other direction is covered by \cite[Corollary 6.3]{ks}.
\end{proof}

\begin{remark}
     (i) Lemma \ref{nonz-lem}.ii.(b) can be thought of as a generalization of \cite[Corollary 3.9]{abhs2011}.

    (ii) Corollary \ref{spli-sfli-noetherian}.(i), which does not seem to have been known before, can be compared with \cite[Proposition 2.1]{et2011} which shows that ``$\silp$" and ``$\silf$" coincide for all rings.

    (iii) It follows from Corollary \ref{ghd0char} in Subsection \ref{s21} that $G$ is locally finite iff $\Ghd_{\mathbb{Z}}(G)=0$, so it might be tempting to think that one can deduce $\Ghd_{\mathbb{Z}}(G)=0$ from $\sfli(\mathbb{Z}G)=1$ and make Corollary \ref{spli-sfli-noetherian}.(ii) a direct consequence of Corollary \ref{ghd0char}, but unfortunately there does not seem to be an obvious way of making this deduction because Lemma \ref{lemma-ks}.(ii) only gives us $\Ghd_{\mathbb{Z}}(G)\leq 1$ from $\sfli(\mathbb{Z}G)=1$. 

    (iv) Note that it is a straightforward application of \cite[Lemma 2.10 and Corollary 6.3]{ks} and Corollary \ref{spli-sfli-noetherian}.(ii) that, for any group $G$, $G$ is locally finite $\Leftrightarrow$ $\sfli(AG)=\sfli(A)$ for all commutative rings $A$ $\Leftrightarrow$ $\sfli(\mathbb{Z}G)=1$. Similarly, as $\spli(AH)=\spli(AG)$ when $H$ is of finite index in $G$, $G$ is finite $\Leftrightarrow$ $\spli(AG)=\spli(A)$ for all commutative rings $A$ $\Leftrightarrow$ $\spli(\mathbb{Z}G)=1$.

\end{remark}

\begin{proof}[Proof of Proposition \ref{principle}] Let $G$ be a group and let $H$ be a finitely generated subgroup of it. 

\textbf{The first implication:} Assume that we have $\sfli(AG)=\sfli(A)$, so we have to now show that $H$ is finite under the extra assumption that $A$ satisfies the $\splif$ principle. 

By Lemma \ref{nonz-lem}.ii.(b), $\sfli(AH)=\silf(AH)$. We therefore have $\sfli(A)\leq \sfli(AH)\leq \sfli(AG)=\sfli(A)$, where the first inequality is always true. So, $\sfli(AH)=\sfli(A)$, and by Lemma \ref{nonz-lem}.ii.(b) and Corollary \ref{spli-sfli-noetherian}.(i), we then have $\spli(AH)=\sfli(AH)=\sfli(A)=\spli(A)$. Now, as $A$ satisfies the $\splif$ principle, $H$ must be finite.

\textbf{The second implication:} Assume that we have $\spli(AG)=\spli(A)$. We have $\sfli(A)\leq \sfli(AG)\leq \spli(AG)=\spli(A)=\sfli(A)$ using Corollary \ref{spli-sfli-noetherian}.(i) for the last equality. So, $\sfli(AG)=\sfli(A)$, and therefore $G$ has to be locally finite as $A$ satisfies the $\sflilf$ principle.\end{proof}

We end with the following question that is quite natural to ask - it remains open.

\begin{question}
    \begin{itemize}
        \item[(i)] Do all Noetherian commutative rings satisfy the $\splif$ (resp. the $\sflilf$ or the $\splilf$) principle?
        \item[(ii)] Is there a class of Noetherian commutative rings wherein a ring satisfies the $\splif$ principle if it is known to satisfy the $\sflilf$ principle?
        \item[(iii)] Are there any large classes of groups (likely defined in terms of Kropholler's hierarchy) such that all (Noetherian) commutative rings (if required, one can assume some finiteness conditions on these rings) satisfy the $\splif$ or the $\sflilf$ (or the weaker $\splilf$) principle as long as the groups are taken from these classes?
    \end{itemize}
\end{question}

\noindent \textit{Acknowledgements.} The first author is supported by the EPSRC Grant
EP/W036320/1 held by John Greenlees at the University of Warwick, and he would also like to thank the Isaac Newton Institute for Mathematical Sciences, Cambridge for their hospitality and support during the ``Equivariant homotopy theory in context" program during which a part of this work was undertaken. The second author is supported by the H.F.R.I. Grant ``GAAC", Project Number: 14907, awarded to Aristides Kontogeorgis at the University of Athens and implemented in the framework of H.F.R.I. call “Basic research Financing (Horizontal support of all Sciences)” under the National Recovery and Resilience Plan “Greece 2.0” funded by the European Union Next Generation EU, H.F.R.I..  
\begin{center}
	\includegraphics[scale=0.4]{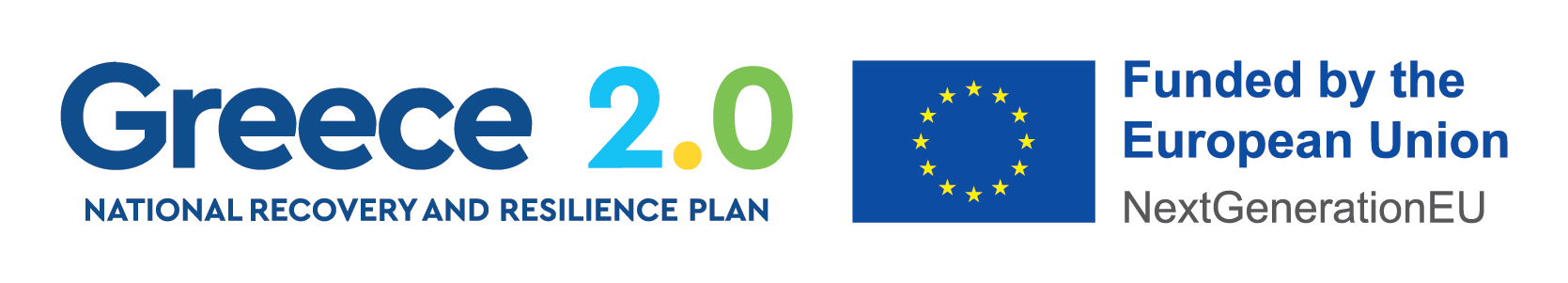}
	\hskip 1cm
	\includegraphics[scale=0.05]{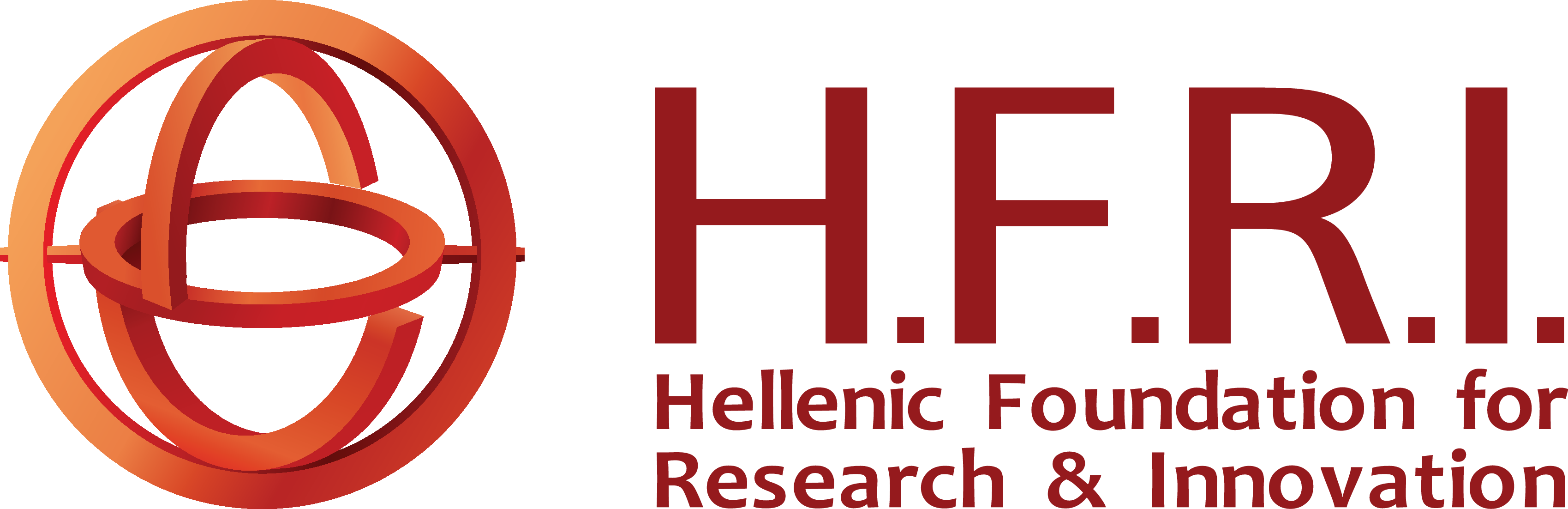}
\end{center}


\begin{thebibliography}{}



\bibitem{br}
Apostolos Beligiannis and Idun Reiten.
\textit{Homological and homotopical aspects of torsion theories.}
Mem. Amer. Math. Soc. 188 (2007), no. 883, viii+207 pp. MR2327478

\bibitem{abhs2011}
Javad Asadollahi, Abdolnaser Bahlekeh, Ali Hajizamani and Shokrollah Salarian.
\textit{On certain homological invariants of groups.}
J. Algebra 335 (2011), 18--35. MR2792564

\bibitem{bennis}
Driss Bennis.
\textit{Rings over which the class of Gorenstein flat modules is closed under extensions.}
Comm. Algebra 37 (2009), no. 3, 855--868. MR2503181

\bibitem{ben1}
David Benson.
\textit{Complexity and varieties for infinite groups, I and II.}
J. Algebra 193 (1997), no. 1, 260--287, 288--317. MR1456576

\bibitem{b1}
Rudradip Biswas.
\textit{On some cohomological invariants for large families of infinite groups.}
New York J. Math. 27 (2021), 818--839. MR4270249

\bibitem{b}
Rudradip Biswas.
\textit{Injective generation of derived categories and other applications of cohomological invariants of infinite groups.}
Comm. Algebra 50 (2022), no. 10, 4460--4480. MR4447470


\bibitem{cet21}
Lars Winther Christensen, Sergio Estrada, and Peder Thompson.
\textit{Gorenstein weak global dimension is symmetric.}
Math. Nachr. 294 (2021), no. 11, 2121--2128. MR4371287

\bibitem{ck97}
Jonathan Cornick and Peter Kropholler.
\textit{On complete resolutions.}
Topology Appl. 78 (1997), no. 3, 235--250. MR1454602

\bibitem{ck98}
Jonathan Cornick and Peter H. Kropholler.
\textit{Homological finiteness conditions for modules over group algebras.}
 J. London Math. Soc. (2) 58 (1998), no. 1, 49--62. MR1666074

\bibitem{de}
Georgios Dalezios and Ioannis Emmanouil.
\textit{Homological dimension based on a class of Gorenstein flat modules.}
C. R. Math. Acad. Sci. Paris 361 (2023), 1429--1448. MR4683321

\bibitem{dt}
Fotini Dembegioti and Olympia Talelli.
\textit{An integral homological characterization of finite groups.}
J. Algebra 319 (2008), no. 1, 267--271. MR2378070

\bibitem{ejoa}
Ioannis Emmanouil.
\textit{A homological characterization of locally finite groups.}
 J. Algebra 352 (2012), 167--172. MR2862179

\bibitem{em}
Ioannis Emmanouil.
\textit{Modules of finite Gorenstein flat dimension and approximations.}
Math. Nachr. 297 (2024), 1187--1207. MR4734968

\bibitem{et2011}
Ioannis Emmanouil and Olympia Talelli.
\textit{On the flat length of injective modules.}
J. Lond. Math. Soc. (2) 84 (2011), no. 2, 408--432. MR2835337


\bibitem{et}
Ioannis Emmanouil and Olympia Talelli.
\textit{Gorenstein dimension and group cohomology with group ring coefficients.}
J. Lond. Math. Soc. (2) 97 (2018), no. 2, 306--324. MR3789849

\bibitem{et25}
Ioannis Emmanouil and Olympia Talelli.
\textit{Characteristic modules and Gorenstein (co-)homological dimension of groups.}
J. Pure Appl. Algebra 229 (2025), no. 1, Paper No. 107830, 25 pp. MR4815927

\bibitem{gandini}
Giovanni Gandini.
\textit{Cohomological invariants and the classifying space for proper actions.}
Groups Geom. Dyn. 6 (2012), no. 4, 659--675. MR2996406

\bibitem{gg}
Thomas Gedrich and Karl Gruenberg.
\textit{Complete cohomological functors on groups.}
Topology Appl. 25 (1987), no. 2, 203--223. MR0884544

\bibitem{hh}
Henrik Holm.
\textit{Gorenstein homological dimensions.}
J. Pure Appl. Algebra 189 (2004), no. 1-3, 167--193. MR2038564


\bibitem{ik}
Bruce Ikenaga.
\textit{Generalized cohomological dimension.}
J. Pure Appl. Algebra 40 (1986), no. 2, 125--134. MR0830316

\bibitem{jkl}
Tadeusz Januszkiewicz, Peter H. Kropholler and Ian Leary.
\textit{Groups possessing extensive hierarchical decompositions.}
Bull. Lond. Math. Soc. 42 (2010), no. 5, 896--904. MR2728692


\bibitem{jo1}
Jang Hyun Jo.
\textit{A criterion for projective modules.}
Comm. Algebra 35 (2007), no. 5, 1577--1587. MR2317631

\bibitem{ks}
Ilias Kaperonis and Dimitra-Dionysia Stergiopoulou.
\textit{Finiteness criteria for Gorenstein homological dimension and some invariants of groups.}
\href{https://arxiv.org/abs/2311.02768}{arXiv:2311.02768}

\bibitem{krop94}
Peter H. Kropholler.
\textit{On groups of type $\FP_{\infty}$.}
 J. Pure Appl. Algebra 90 (1993), no. 1, 55--67. MR1246274

\bibitem{ry}
Wei Ren and Gang Yang.
\textit{Gorenstein homological dimension and some invariants of groups.}
J. Commut. Algebra, to appear. \href{https://arxiv.org/abs/2211.02221}{arXiv:2211.02221}.

\bibitem{ss}
Jan Saroch and Jan Stovicek.
\textit{Singular compactness and definability for $\Sigma$-cotorsion and Gorenstein modules.}
Selecta Math. (N.S.) 26 (2020), no. 2, Paper No. 23, 40 pp. MR4076700

\bibitem{dds}
Dimitra-Dionysia Stergiopoulou.
\textit{Projectively coresolved Gorenstein flat dimension of groups.}
 J. Algebra 641 (2024), 105--146. MR4673291

\bibitem{dds2}
Dimitra-Dionysia Stergiopoulou.
\textit{Gorenstein modules and dimension over large families of infinite groups.}
 Collect. Math. (2024), \href{https://doi.org/10.1007/s13348-024-00454-8}{doi.org/10.1007/s13348-024-00454-8}.

\bibitem{strebel}
Ralph Strebel.
\textit{A remark on subgroups of infinite index in Poincare duality groups.}
Comment. Math. Helv. 52 (1977), no. 3, 317--324. MR0457588

\bibitem{t}
Olympia Talelli.
\textit{On groups of type $\Phi$.}
Arch. Math. (Basel) 89 (2007), no. 1, 24--32. MR2322776

\bibitem{tal-char}
Olympia Talelli.
\textit{On characteristic modules of groups.}
Geometric and cohomological group theory, 172--181, London Math. Soc. Lecture Note Ser., 444, Cambridge Univ. Press, Cambridge, 2018. MR3822292

\end{thebibliography}
\end{document}